\documentclass{amsart}
\usepackage{amsmath}
\usepackage{amssymb}
\usepackage{amsfonts}
\usepackage{graphicx}
\usepackage{tikz}
\usepackage{longtable}
\usepackage{array}

\usepackage{lipsum} 
\usepackage[T1]{fontenc}
\newtheorem{theorem}{Theorem}
\newtheorem{lemma}{Lemma}

\newtheorem{example}{Example}
\newtheorem{remark}{Remark}

\tikzset{
    main_line/.style = {
        draw=black,
        line width=1pt
    },
    select_line/.style = {
        draw=gray,
        line width=1pt
    },
    select_back_line/.style = {
        draw=gray,
        line width=1pt,
        dashed,
        line cap=round
    },
    back_line/.style = {
        draw=black,
        line width=1pt,
        dashed,
        line cap=round
    },
    point/.style = {
        draw=black,
        line width=0pt,
        fill
    },
    select_point/.style = {
        draw=gray,
        line width=0pt,
        fill=gray
    },
}

\title{Quadrangulation moves}
\author{Korablev Ph. G.}

\address{Chelyabinsk State University, Chelyabinsk, Russia; N.N. Krasovsky Institute of Mathematics and Meckhanics, Ekaterinburg, Russia}
\email{korablev@csu.ru}
\date{}

\begin{document}

\begin{abstract}
    We prove that any two quadrangulations of the same surface, i.e., decompositions of a surface into squares, can be transformed into one another by a finite sequence of two-dimensional local cubical Pachner moves and non-local subdivision transformations.
\end{abstract}

\maketitle

\section{Introduction}

It is well known that any two triangulations of any $n$-manifold can be transformed into one another by a finite sequence of local Pachner moves (\cite{FH,L,P,RST}). Each move replaces a subset of $n$-faces of an $(n+1)$-dimensional simplex with the complementary subset of faces.

Analogously to triangulations, a cubulation is a presentation of a manifold as a result of gluing some $n$-cubes along their $(n-1)$-faces. The classification of 3-manifolds with low cubical complexity was carried out in \cite{A1,A2,KK}. Sometimes, cubulations are called cubications.

Similar to classical Pachner moves of triangulations, we can consider cubical Pachner moves for cubulations. Each such move replaces a set of $n$-faces of an $(n+1)$-cube by the complementary set of faces. It is clear that these moves cannot transform one cubulation of an $n$-manifold into another, because the parity of cubes is conserved under cubical Pachner moves. Some characterisations of cubulation classes modulo cubical Pachner moves were studied in \cite{F1,F2}.

In the 2-dimensional case, when the manifolds are surfaces, cubulations are called quadrangulations. In this case, a complete set of moves has been found in \cite{N}. It consists of two types of diagonal switches. These switches are not related to cubical Pachner moves. The results of \cite{N} apply not to all surfaces, but only to closed ones, and furthermore only to so-called bipartite quadrangulations.

The main subject of this paper is to prove that any two quadrangulations of the same surface can be transformed into each other by a finite sequence of 2-dimensional cubical Pachner moves and one additional non-local transformation --- subdivision.

The structure of the paper is as follows. In Section 2, we give formal definitions for cubulations and describe how to construct a cubulation of a manifold from any triangulation. We also describe cubical Pachner moves and the subdivision transformation. For the notation of moves, we follow the approach of \cite{BEE}. All definitions and constructions apply to the general $n$-dimensional case, although the main result applies to the case $n = 2$.

In Section 3, we prove two intermediate lemmas and the main theorem.

\section{Cubulations}

Let $I = [-1, 1]$ be the interval. By an \emph{$n$-dimensional cube} (or simply \emph{$n$-cube}) we mean $C^n = I\times\ldots\times I$, where the product is taken exactly $n$ times. A 1-cube is simply an interval, a 2-cube is a square, a 3-cube is a standard three-dimensional cube, and so on.

Each $n$-cube $C^n$ has $2n$ $(n - 1)$-dimensional faces: $\partial_i^{-} C^n = I\times \ldots \{-1\}\times\ldots I$, $i = 1, \ldots, n$, and $\partial_i^{+} C^n = I\times \ldots \{1\}\times\ldots I$, $i = 1, \ldots, n$ (see fig. \ref{Fig:Faces}). It is clear that each $(n - 1)$-dimensional face is an $(n - 1)$-cube. Each of these in turn has $(n - 2)$-dimensional faces, and so on.

\begin{figure}[ht]
    \begin{center}
        \begin{tikzpicture}[scale=1.7, baseline=(current bounding box.center)]
            \path[main_line] (0.411, 28.464) -- (1.306, 28.464);
            \path[point] (0.43, 28.464) node[left] {$\partial_1^{-} C^1$} arc(0.0:90.0:0.025 and -0.025)arc(90.0:180.0:0.025 and -0.025)arc(180.0:270.0:0.025 and -0.025)arc(270.0:360.0:0.025 and -0.025) -- cycle;
            \path[point] (1.344, 28.464) node[right] {$\partial_1^{+} C^1$}arc(0.0:90.0:0.025 and -0.025)arc(90.0:180.0:0.025 and -0.025)arc(180.0:270.0:0.025 and -0.025)arc(270.0:360.0:0.025 and -0.025) -- cycle;
        \end{tikzpicture}
        \hfill
        \begin{tikzpicture}[scale=1.7, baseline=(current bounding box.center)]
            \path[main_line] (1.6, 28.313) -- (2.495, 28.313) node[midway, below] {$\partial_2^{-}C^2$};
            \path[main_line] (1.6, 29.208) -- (2.495, 29.208) node[midway, above] {$\partial_2^{+}C^2$};
            \path[main_line] (2.495, 28.313) -- (2.495, 29.208)  node[midway, right] {$\partial_1^{+}C^2$};
            \path[main_line] (1.6, 29.208) -- (1.6, 28.313)  node[midway, left] {$\partial_1^{-}C^2$};
            
            \path[point] (1.625, 28.313)arc(0.0:90.0:0.025 and -0.025)arc(90.0:180.0:0.025 and -0.025)arc(180.0:270.0:0.025 and -0.025)arc(270.0:360.0:0.025 and -0.025) -- cycle;
            \path[point] (1.625, 29.208)arc(0.0:90.0:0.025 and -0.025)arc(90.0:180.0:0.025 and -0.025)arc(180.0:270.0:0.025 and -0.025)arc(270.0:360.0:0.025 and -0.025) -- cycle;
            \path[point] (2.52, 29.208)arc(0.0:90.0:0.025 and -0.025)arc(90.0:180.0:0.025 and -0.025)arc(180.0:270.0:0.025 and -0.025)arc(270.0:360.0:0.025 and -0.025) -- cycle;
            \path[point] (2.52, 28.313)arc(0.0:90.0:0.025 and -0.025)arc(90.0:180.0:0.025 and -0.025)arc(180.0:270.0:0.025 and -0.025)arc(270.0:360.0:0.025 and -0.025) -- cycle;
        \end{tikzpicture}
        \hfill
        \begin{tikzpicture}[scale=1.9, baseline={([yshift=-7pt]current bounding box.center)}]
            \path[point] (3.018, 28.048)arc(0.0:90.0:0.025 and -0.025)arc(90.0:180.0:0.025 and -0.025)arc(180.0:270.0:0.025 and -0.025)arc(270.0:360.0:0.025 and -0.025) -- cycle;
            \path[point] (3.018, 28.943)arc(0.0:90.0:0.025 and -0.025)arc(90.0:180.0:0.025 and -0.025)arc(180.0:270.0:0.025 and -0.025)arc(270.0:360.0:0.025 and -0.025) -- cycle;
            \path[point] (3.458, 29.197)arc(0.0:90.0:0.025 and -0.025)arc(90.0:180.0:0.025 and -0.025)arc(180.0:270.0:0.025 and -0.025)arc(270.0:360.0:0.025 and -0.025) -- cycle;
            \path[point] (3.91, 28.948)arc(0.0:90.0:0.025 and -0.025)arc(90.0:180.0:0.025 and -0.025)arc(180.0:270.0:0.025 and -0.025)arc(270.0:360.0:0.025 and -0.025) -- cycle;
            \path[point] (4.353, 29.197)arc(0.0:90.0:0.025 and -0.025)arc(90.0:180.0:0.025 and -0.025)arc(180.0:270.0:0.025 and -0.025)arc(270.0:360.0:0.025 and -0.025) -- cycle;
            \path[point] (3.914, 28.048)arc(0.0:90.0:0.025 and -0.025)arc(90.0:180.0:0.025 and -0.025)arc(180.0:270.0:0.025 and -0.025)arc(270.0:360.0:0.025 and -0.025) -- cycle;
            \path[point] (4.353, 28.302)arc(0.0:90.0:0.025 and -0.025)arc(90.0:180.0:0.025 and -0.025)arc(180.0:270.0:0.025 and -0.025)arc(270.0:360.0:0.025 and -0.025) -- cycle;
            \path[point] (3.458, 28.302)arc(0.0:90.0:0.025 and -0.025)arc(90.0:180.0:0.025 and -0.025)arc(180.0:270.0:0.025 and -0.025)arc(270.0:360.0:0.025 and -0.025) -- cycle;
            
            \path[main_line] (2.993, 28.048) -- (3.888, 28.048) node[midway, below] {$\partial_3^{-}C^3$};
            \path[main_line] (2.993, 28.943) -- (3.888, 28.943);
            \path[main_line] (3.888, 28.048) -- (3.888, 28.943);
            \path[main_line] (2.993, 28.943) -- (2.993, 28.048) node[midway, left] {$\partial_1^{-}C^3$};
            \path[main_line] (4.327, 28.302) -- (3.888, 28.048);
            \path[main_line] (4.327, 29.197) -- (3.888, 28.943);
            \path[back_line] (3.432, 28.302) -- (2.993, 28.048);
            \path[main_line] (3.432, 29.197) -- (2.993, 28.943);
            \path[back_line] (3.432, 28.302) -- (4.327, 28.302);
            \path[main_line] (3.432, 29.197) -- (4.327, 29.197) node[midway, above] {$\partial_3^{+}C^3$};
            \path[main_line] (4.327, 28.302) -- (4.327, 29.197)  node[midway, right] {$\partial_1^{+}C^3$};
            \path[back_line] (3.432, 29.197) -- (3.432, 28.302);

            \draw (3.66, 28.6) node {{\small $\partial_2^{-}C^3$}};
            \draw (4.1, 28.8) node {{\small $\partial_2^{+}C^3$}};
        \end{tikzpicture}
    \end{center}
    \caption{\label{Fig:Faces}$n$-cubes ($n = 1$ on the left, $n = 2$ in the center, $n = 3$ on the right) and their $(n - 1)$-faces}
\end{figure}

By \emph{cubulation} of an $n$-manifold $M$ we mean the presentation of $M$ as the result of gluing $n$-cubes via homeomorphisms between their $(n - 1)$-dimensional faces that map vertices to vertices, edges to edges, and so on. We call two cubulations $\mathcal{C}_1$ and $\mathcal{C}_2$ of the manifold $M$ \emph{equivalent} if there exists a homeomorphism $\varphi\colon M\to M$ which preserves the faces of the cubulation, i.e., maps each $k$-face of $\mathcal{C}_1$ to a $k$-face of $\mathcal{C}_2$.

It is easy to show that if an $n$-manifold $M$ admits a triangulation, then it admits a cubulation. Indeed, consider the following procedure, which transforms any triangulation $\mathcal{T}$ of the $n$-manifold $M$ into a cubulation $\mathcal{C}$ of $M$. We define the procedure inductively. Each 1-simplex of $\mathcal{T}$ (an edge) is split into two 1-cubes by the center of this simplex (see fig. \ref{Fig:CubulationFromTriangulation} on the left). This center is the 1-dimensional splitting set. Each 2-simplex of $\mathcal{T}$ (a triangle) is split into three 2-cubes by taking the cone with the center in the triangle's center over the union of all 1-dimensional splitting sets (see fig. \ref{Fig:CubulationFromTriangulation} in the center). This cone is the 2-dimensional splitting set. Next, each 3-simplex of $\mathcal{T}$ (a tetrahedron) is split into four 3-cubes by taking a cone with the center in the tetrahedron's center over the union of all 2-dimensional splitting sets. This cone is the 3-dimensional splitting set (see fig. \ref{Fig:CubulationFromTriangulation} on the right). In general: for any dimension $k\in\{1, \ldots, n\}$ we split each $k$-simplex of $\mathcal{T}$ into $k + 1$ $k$-cubes by taking the cone with the center in the center of the simplex over the union of all $(k - 1)$-dimensional splitting sets built at the previous step. It is clear that if the triangulation $\mathcal{T}$ has $x$ $n$-simplices, then the constructed cubulation $\mathcal{C}$ contains $(n + 1)\cdot x$ $n$-cubes. Denote $\mathcal{C} = \beta(\mathcal{T})$. Thus, the map $\beta$ transforms each triangulation of the manifold into a corresponding cubulation of the same manifold.

\begin{figure}[ht]
    \begin{center}
        \begin{tikzpicture}[scale=2.0, baseline=(current bounding box.center)]
            \path[main_line] (0.425, 26.086) -- (1.858, 26.086);
            \path[point] (0.438, 26.086)arc(0.0:90.0:0.025 and -0.025)arc(90.0:180.0:0.025 and -0.025)arc(180.0:270.0:0.025 and -0.025)arc(270.0:360.0:0.025 and -0.025) -- cycle;
            \path[select_point] (1.167, 26.086)arc(0.0:90.0:0.025 and -0.025)arc(90.0:180.0:0.025 and -0.025)arc(180.0:270.0:0.025 and -0.025)arc(270.0:360.0:0.025 and -0.025) -- cycle;
            \path[point] (1.896, 26.086)arc(0.0:90.0:0.025 and -0.025)arc(90.0:180.0:0.025 and -0.025)arc(180.0:270.0:0.025 and -0.025)arc(270.0:360.0:0.025 and -0.025) -- cycle;
        \end{tikzpicture}
        \hfill
        \begin{tikzpicture}[scale=2.0, baseline=(current bounding box.center)]
            \path[point] (2.405, 25.75)arc(0.0:90.0:0.025 and -0.025)arc(90.0:180.0:0.025 and -0.025)arc(180.0:270.0:0.025 and -0.025)arc(270.0:360.0:0.025 and -0.025) -- cycle;
            \path[point] (3.83, 25.746)arc(0.0:90.0:0.025 and -0.025)arc(90.0:180.0:0.025 and -0.025)arc(180.0:270.0:0.025 and -0.025)arc(270.0:360.0:0.025 and -0.025) -- cycle;
            \path[point] (3.117, 26.974)arc(0.0:90.0:0.025 and -0.025)arc(90.0:180.0:0.025 and -0.025)arc(180.0:270.0:0.025 and -0.025)arc(270.0:360.0:0.025 and -0.025) -- cycle;
            
            \path[main_line] (3.804, 25.746) -- (2.379, 25.746) -- (3.091, 26.981) -- cycle;
            \path[select_line] (2.769, 26.422) -- (3.091, 26.187);
            \path[select_line] (3.414, 26.422) -- (3.091, 26.187);
            \path[select_line] (3.091, 25.746) -- (3.091, 26.187);

            \path[select_point] (3.117, 25.75)arc(0.0:90.0:0.025 and -0.025)arc(90.0:180.0:0.025 and -0.025)arc(180.0:270.0:0.025 and -0.025)arc(270.0:360.0:0.025 and -0.025) -- cycle;
            \path[select_point] (3.44, 26.422)arc(0.0:90.0:0.025 and -0.025)arc(90.0:180.0:0.025 and -0.025)arc(180.0:270.0:0.025 and -0.025)arc(270.0:360.0:0.025 and -0.025) -- cycle;
            \path[select_point] (2.794, 26.422)arc(0.0:90.0:0.025 and -0.025)arc(90.0:180.0:0.025 and -0.025)arc(180.0:270.0:0.025 and -0.025)arc(270.0:360.0:0.025 and -0.025) -- cycle;
            \path[select_point] (3.117, 26.187)arc(0.0:90.0:0.025 and -0.025)arc(90.0:180.0:0.025 and -0.025)arc(180.0:270.0:0.025 and -0.025)arc(270.0:360.0:0.025 and -0.025) -- cycle;
        \end{tikzpicture}
        \hfill
        \begin{tikzpicture}[scale=2.0, baseline=(current bounding box.center)]
            \path[back_line] (4.31, 25.716) -- (5.671, 26.195);

            \path[fill=gray, fill opacity = 0.4] (4.609, 26.431) -- (4.837, 26.125) -- (4.774, 25.617) -- (5.132, 25.835) -- (5.493, 25.897) -- (5.305, 26.335) -- (5.323, 26.609) -- (4.986, 26.396) -- cycle;

            \path[point] (4.343, 25.721)arc(0.0:90.0:0.025 and -0.025)arc(90.0:180.0:0.025 and -0.025)arc(180.0:270.0:0.025 and -0.025)arc(270.0:360.0:0.025 and -0.025) -- cycle;
            \path[point] (5.283, 25.516)arc(0.0:90.0:0.025 and -0.025)arc(90.0:180.0:0.025 and -0.025)arc(180.0:270.0:0.025 and -0.025)arc(270.0:360.0:0.025 and -0.025) -- cycle;
            \path[point] (5.689, 26.19)arc(0.0:90.0:0.025 and -0.025)arc(90.0:180.0:0.025 and -0.025)arc(180.0:270.0:0.025 and -0.025)arc(270.0:360.0:0.025 and -0.025) -- cycle;
            \path[point] (4.92, 27.101)arc(0.0:90.0:0.025 and -0.025)arc(90.0:180.0:0.025 and -0.025)arc(180.0:270.0:0.025 and -0.025)arc(270.0:360.0:0.025 and -0.025) -- cycle;

            \path[select_back_line] (4.605, 26.429) -- (4.986, 26.396) -- (5.325, 26.609);
            \path[select_back_line] (5.305, 26.338) -- (5.058, 26.224);
            \path[select_back_line] (5.039, 25.972) -- (5.132, 25.835) -- (5.493, 25.897);
            
            \path[main_line] (4.31, 25.716) -- (4.892, 27.11) -- (5.671, 26.195) -- (5.26, 25.514) -- cycle;
            \path[main_line] (4.892, 27.11) -- (5.26, 25.514);
            
            \path[select_line] (4.609, 26.431) -- (4.837, 26.123) -- (5.082, 26.305) -- (5.305, 26.336) -- (5.323, 26.609);
            \path[select_line] (4.774, 25.617) -- (4.834, 26.123);
            \path[select_line] (5.493, 25.897) -- (5.305, 26.335);
            \path[select_back_line] (5.039, 25.972) -- (4.989, 26.393);
            \path[select_back_line] (5.056, 26.227) -- (4.989, 26.396);
            \path[select_back_line] (4.837, 26.125) -- (5.06, 26.228);
            \path[select_back_line] (5.058, 26.224) -- (5.132, 25.835);
            \path[select_back_line] (4.774, 25.617) -- (5.132, 25.835);

            \path[select_point] (5.083, 26.226)arc(0.0:90.0:0.025 and -0.025)arc(90.0:180.0:0.025 and -0.025)arc(180.0:270.0:0.025 and -0.025)arc(270.0:360.0:0.025 and -0.025) -- cycle;
            \path[select_point] (4.634, 26.431)arc(0.0:90.0:0.025 and -0.025)arc(90.0:180.0:0.025 and -0.025)arc(180.0:270.0:0.025 and -0.025)arc(270.0:360.0:0.025 and -0.025) -- cycle;
            \path[select_point] (5.105, 26.302)arc(0.0:90.0:0.025 and -0.025)arc(90.0:180.0:0.025 and -0.025)arc(180.0:270.0:0.025 and -0.025)arc(270.0:360.0:0.025 and -0.025) -- cycle;
            \path[select_point] (5.345, 26.609)arc(0.0:90.0:0.025 and -0.025)arc(90.0:180.0:0.025 and -0.025)arc(180.0:270.0:0.025 and -0.025)arc(270.0:360.0:0.025 and -0.025) -- cycle;
            \path[select_point] (5.065, 25.972)arc(0.0:90.0:0.025 and -0.025)arc(90.0:180.0:0.025 and -0.025)arc(180.0:270.0:0.025 and -0.025)arc(270.0:360.0:0.025 and -0.025) -- cycle;
            \path[select_point] (4.803, 25.618)arc(0.0:90.0:0.025 and -0.025)arc(90.0:180.0:0.025 and -0.025)arc(180.0:270.0:0.025 and -0.025)arc(270.0:360.0:0.025 and -0.025) -- cycle;
            \path[select_point] (5.519, 25.897)arc(0.0:90.0:0.025 and -0.025)arc(90.0:180.0:0.025 and -0.025)arc(180.0:270.0:0.025 and -0.025)arc(270.0:360.0:0.025 and -0.025) -- cycle;
            \path[select_point] (5.157, 25.835)arc(0.0:90.0:0.025 and -0.025)arc(90.0:180.0:0.025 and -0.025)arc(180.0:270.0:0.025 and -0.025)arc(270.0:360.0:0.025 and -0.025) -- cycle;
            \path[select_point] (5.327, 26.337)arc(0.0:90.0:0.025 and -0.025)arc(90.0:180.0:0.025 and -0.025)arc(180.0:270.0:0.025 and -0.025)arc(270.0:360.0:0.025 and -0.025) -- cycle;
            \path[select_point] (4.862, 26.124)arc(0.0:90.0:0.025 and -0.025)arc(90.0:180.0:0.025 and -0.025)arc(180.0:270.0:0.025 and -0.025)arc(270.0:360.0:0.025 and -0.025) -- cycle;
            \path[select_point] (5.014, 26.396)arc(0.0:90.0:0.025 and -0.025)arc(90.0:180.0:0.025 and -0.025)arc(180.0:270.0:0.025 and -0.025)arc(270.0:360.0:0.025 and -0.025) -- cycle;

        \end{tikzpicture}
    \end{center}
    \caption{\label{Fig:CubulationFromTriangulation}Splitting each $n$-simplex into $n + 1$ $n$-cubes ($n = 1$ on the left, $n = 2$ in the center, $n = 3$ on the right), the splitting set is drawn in gray}
\end{figure}

We say that a cubulation $\mathcal{C}$ of $M$ \emph{matches} the triangulation $\mathcal{T}$ of $M$ if $\mathcal{C}$ is equivalent to $\beta(\mathcal{T})$.

It is well known that any two triangulations $\mathcal{T}_1$ and $\mathcal{T}_2$ of an $n$-manifold $M$ can be connected by a sequence of \emph{Pachner} moves. Each such move is a local transformation that replaces a set of $n$-faces of an $(n + 1)$-simplex by the complementary set of faces. It is possible to define similar \emph{cubical Pachner moves} for cubulations. The main idea is that each move replaces a set of $n$-faces of an $(n + 1)$-cube by the complementary set of faces. The details of this definition are as follows.

Let $C^{n + 1}$ be an $(n + 1)$-cube. Its $n$-faces are divided into $n + 1$ pairs of opposite faces: $(\partial_i^{-}C^{n + 1}, \partial_i^{+}C^{n + 1})$, $i = 1, \ldots, n + 1$. Let $S$ be a set of $n$-faces of the cube $C^{n + 1}$. We say that the set $S$ has type $(X, Y)$ if there are exactly $X$ pairs of opposite $n$-faces such that neither face belongs to $S$ (i.e., the pair is disjoint from $S$), and exactly $Y$ pairs such that both faces belong to $S$ (i.e., the pair is fully contained in $S$). It is clear that if $S$ has type $(X, Y)$, then the complementary set of $n$-faces $\overline{S}$ has type $(Y, X)$.

Each cubical Pachner move of type $(X, Y)$ replaces a set $S$ of $n$-faces of an $(n + 1)$-cube by the complementary set of faces $\overline{S}$. It is proved in \cite[Lemma 1]{BEE} that the union of the faces from $S$ is an $n$-ball if and only if $X + Y < n + 1$. Thus, if a type $(X, Y)$ satisfies this condition, the corresponding cubical Pachner move does not change the topology of the $n$-manifold. It only changes the cubulation of this manifold.

\begin{example}
    There are exactly two cubical Pachner moves up to inversion in dimension $n = 1$ (see fig. \ref{Fig:1Moves}). One of them has type $(0, 0)$, and the other has type $(1, 0)$ (its inverse being of type $(0, 1)$).

    \begin{figure}[ht]
        \begin{center}
            \ \hfill
            \begin{tikzpicture}[scale=2.0]
                \begin{scope}
                    \path[main_line] (0.41, 24.823) -- (1.843, 24.823);
                    \path[point] (0.423, 24.823)arc(0.0:90.0:0.025 and -0.025)arc(90.0:180.0:0.025 and -0.025)arc(180.0:270.0:0.025 and -0.025)arc(270.0:360.0:0.025 and -0.025) -- cycle;
                    \path[point] (1.881, 24.823)arc(0.0:90.0:0.025 and -0.025)arc(90.0:180.0:0.025 and -0.025)arc(180.0:270.0:0.025 and -0.025)arc(270.0:360.0:0.025 and -0.025) -- cycle;
                \end{scope}
                \begin{scope}[shift={(0, -0.6)}]
                    \path[main_line] (0.41, 24.385) -- (1.843, 24.385);
                    \path[point] (0.423, 24.385)arc(0.0:90.0:0.025 and -0.025)arc(90.0:180.0:0.025 and -0.025)arc(180.0:270.0:0.025 and -0.025)arc(270.0:360.0:0.025 and -0.025) -- cycle;
                    \path[point] (1.881, 24.385)arc(0.0:90.0:0.025 and -0.025)arc(90.0:180.0:0.025 and -0.025)arc(180.0:270.0:0.025 and -0.025)arc(270.0:360.0:0.025 and -0.025) -- cycle;
                    \path[point] (1.41, 24.385)arc(0.0:90.0:0.025 and -0.025)arc(90.0:180.0:0.025 and -0.025)arc(180.0:270.0:0.025 and -0.025)arc(270.0:360.0:0.025 and -0.025) -- cycle;
                    \path[point] (0.922, 24.385)arc(0.0:90.0:0.025 and -0.025)arc(90.0:180.0:0.025 and -0.025)arc(180.0:270.0:0.025 and -0.025)arc(270.0:360.0:0.025 and -0.025) -- cycle;
                \end{scope}

                \draw[->, thick] (0.75, 24.7) to[bend right, looseness=1.0] node[midway, left] {$(1, 0)$} (0.75, 23.9);

                \draw[<-, thick] (1.5, 24.7) to[bend left, looseness=1.0] node[midway, right] {$(0, 1)$} (1.5, 23.9);
            \end{tikzpicture}
            \hfill
            \begin{tikzpicture}[scale=2.0]
                \begin{scope}
                    \path[main_line] (2.333, 24.848) -- (3.766, 24.848);
                    \path[point] (2.346, 24.848)arc(0.0:90.0:0.025 and -0.025)arc(90.0:180.0:0.025 and -0.025)arc(180.0:270.0:0.025 and -0.025)arc(270.0:360.0:0.025 and -0.025) -- cycle;
                    \path[point] (3.1, 24.848)arc(0.0:90.0:0.025 and -0.025)arc(90.0:180.0:0.025 and -0.025)arc(180.0:270.0:0.025 and -0.025)arc(270.0:360.0:0.025 and -0.025) -- cycle;
                    \path[point] (3.804, 24.848)arc(0.0:90.0:0.025 and -0.025)arc(90.0:180.0:0.025 and -0.025)arc(180.0:270.0:0.025 and -0.025)arc(270.0:360.0:0.025 and -0.025) -- cycle;
                \end{scope}
                \begin{scope}[shift={(0, -1)}]
                    \path[main_line] (2.333, 24.848) -- (3.766, 24.848);
                    \path[point] (2.346, 24.848)arc(0.0:90.0:0.025 and -0.025)arc(90.0:180.0:0.025 and -0.025)arc(180.0:270.0:0.025 and -0.025)arc(270.0:360.0:0.025 and -0.025) -- cycle;
                    \path[point] (3.1, 24.848)arc(0.0:90.0:0.025 and -0.025)arc(90.0:180.0:0.025 and -0.025)arc(180.0:270.0:0.025 and -0.025)arc(270.0:360.0:0.025 and -0.025) -- cycle;
                    \path[point] (3.804, 24.848)arc(0.0:90.0:0.025 and -0.025)arc(90.0:180.0:0.025 and -0.025)arc(180.0:270.0:0.025 and -0.025)arc(270.0:360.0:0.025 and -0.025) -- cycle;
                \end{scope}

                \draw[<->, thick] (3.1, 24.7) to[bend right, looseness=0.0] node[midway, left] {$(0, 0)$} (3.1, 24.0);
            \end{tikzpicture}
            \hfill\ \ 
        \end{center}
        \caption{\label{Fig:1Moves}Cubical Pachner moves for dimension $n = 1$}
    \end{figure}

    There are exactly four cubical Pachner moves  up to inversion in dimension $n = 2$ (see fig. \ref{Fig:2Moves}): $(0, 0)$, $(1, 0)$ (opposite to $(0, 1)$), $(1, 1)$, and $(2, 0)$ (opposite to $(0, 2)$).

    \begin{figure}[ht]
        \begin{center}
            \begin{tikzpicture}[scale=2.0]
                \begin{scope}
                    \path[point] (4.272, 25.24)arc(0.0:90.0:0.025 and -0.025)arc(90.0:180.0:0.025 and -0.025)arc(180.0:270.0:0.025 and -0.025)arc(270.0:360.0:0.025 and -0.025) -- cycle;
                    \path[point] (4.272, 24.453)arc(0.0:90.0:0.025 and -0.025)arc(90.0:180.0:0.025 and -0.025)arc(180.0:270.0:0.025 and -0.025)arc(270.0:360.0:0.025 and -0.025) -- cycle;
                    \path[point] (5.059, 24.453)arc(0.0:90.0:0.025 and -0.025)arc(90.0:180.0:0.025 and -0.025)arc(180.0:270.0:0.025 and -0.025)arc(270.0:360.0:0.025 and -0.025) -- cycle;
                    \path[point] (5.059, 25.24)arc(0.0:90.0:0.025 and -0.025)arc(90.0:180.0:0.025 and -0.025)arc(180.0:270.0:0.025 and -0.025)arc(270.0:360.0:0.025 and -0.025) -- cycle;

                    \path[main_line] (4.247, 25.24) -- (5.034, 25.24) -- (5.034, 24.453) -- (4.247, 24.453) -- cycle;
                \end{scope}
                \begin{scope}[shift={(-1.05, -1.4)}]
                    \path[point] (5.324, 25.24)arc(0.0:90.0:0.025 and -0.025)arc(90.0:180.0:0.025 and -0.025)arc(180.0:270.0:0.025 and -0.025)arc(270.0:360.0:0.025 and -0.025) -- cycle;
                    \path[point] (5.572, 24.992)arc(0.0:90.0:0.025 and -0.025)arc(90.0:180.0:0.025 and -0.025)arc(180.0:270.0:0.025 and -0.025)arc(270.0:360.0:0.025 and -0.025) -- cycle;
                    \path[point] (5.863, 24.992)arc(0.0:90.0:0.025 and -0.025)arc(90.0:180.0:0.025 and -0.025)arc(180.0:270.0:0.025 and -0.025)arc(270.0:360.0:0.025 and -0.025) -- cycle;
                    \path[point] (5.863, 24.701)arc(0.0:90.0:0.025 and -0.025)arc(90.0:180.0:0.025 and -0.025)arc(180.0:270.0:0.025 and -0.025)arc(270.0:360.0:0.025 and -0.025) -- cycle;
                    \path[point] (5.572, 24.701)arc(0.0:90.0:0.025 and -0.025)arc(90.0:180.0:0.025 and -0.025)arc(180.0:270.0:0.025 and -0.025)arc(270.0:360.0:0.025 and -0.025) -- cycle;
                    \path[point] (5.324, 24.453)arc(0.0:90.0:0.025 and -0.025)arc(90.0:180.0:0.025 and -0.025)arc(180.0:270.0:0.025 and -0.025)arc(270.0:360.0:0.025 and -0.025) -- cycle;
                    \path[point] (6.111, 24.453)arc(0.0:90.0:0.025 and -0.025)arc(90.0:180.0:0.025 and -0.025)arc(180.0:270.0:0.025 and -0.025)arc(270.0:360.0:0.025 and -0.025) -- cycle;
                    \path[point] (6.111, 25.24)arc(0.0:90.0:0.025 and -0.025)arc(90.0:180.0:0.025 and -0.025)arc(180.0:270.0:0.025 and -0.025)arc(270.0:360.0:0.025 and -0.025) -- cycle;
                    \path[main_line] (5.299, 25.24) -- (6.086, 25.24) -- (6.086, 24.453) -- (5.299, 24.453) -- cycle;
                    \path[main_line] (5.547, 24.992) -- (5.838, 24.992) -- (5.838, 24.701) -- (5.547, 24.701) -- cycle;
                    \path[main_line] (5.547, 24.992) -- (5.299, 25.24);
                    \path[main_line] (5.838, 24.992) -- (6.086, 25.24);
                    \path[main_line] (5.838, 24.701) -- (6.086, 24.453);
                    \path[main_line] (5.547, 24.701) -- (5.299, 24.453);
                \end{scope}
                \draw[->, thick] (4.4, 24.4) to[bend right, looseness=1.0] node[midway, left] {$(2, 0)$} (4.4, 23.9);

                \draw[<-, thick] (4.8, 24.4) to[bend left, looseness=1.0] node[midway, right] {$(0, 2)$} (4.8, 23.9);
            \end{tikzpicture}
            \hfill
            \begin{tikzpicture}[scale=2.0]
                \begin{scope}
                    \path[point] (7.085, 25.24)arc(0.0:90.0:0.025 and -0.025)arc(90.0:180.0:0.025 and -0.025)arc(180.0:270.0:0.025 and -0.025)arc(270.0:360.0:0.025 and -0.025) -- cycle;
                    \path[point] (7.565, 25.24)arc(0.0:90.0:0.025 and -0.025)arc(90.0:180.0:0.025 and -0.025)arc(180.0:270.0:0.025 and -0.025)arc(270.0:360.0:0.025 and -0.025) -- cycle;
                    \path[point] (7.807, 24.819)arc(0.0:90.0:0.025 and -0.025)arc(90.0:180.0:0.025 and -0.025)arc(180.0:270.0:0.025 and -0.025)arc(270.0:360.0:0.025 and -0.025) -- cycle;
                    \path[point] (7.567, 24.406)arc(0.0:90.0:0.025 and -0.025)arc(90.0:180.0:0.025 and -0.025)arc(180.0:270.0:0.025 and -0.025)arc(270.0:360.0:0.025 and -0.025) -- cycle;
                    \path[point] (7.083, 24.406)arc(0.0:90.0:0.025 and -0.025)arc(90.0:180.0:0.025 and -0.025)arc(180.0:270.0:0.025 and -0.025)arc(270.0:360.0:0.025 and -0.025) -- cycle;
                    \path[point] (6.842, 24.818)arc(0.0:90.0:0.025 and -0.025)arc(90.0:180.0:0.025 and -0.025)arc(180.0:270.0:0.025 and -0.025)arc(270.0:360.0:0.025 and -0.025) -- cycle;
                    
                    \path[main_line,shift={(-0.04, -0.445)}] (7.58, 24.847) -- (7.096, 24.847) -- (6.855, 25.267) -- (7.097, 25.685) -- (7.581, 25.685) -- (7.823, 25.265) -- cycle;
                    \path[main_line] (6.815, 24.821) -- (7.783, 24.82);
                \end{scope}
                \begin{scope}[shift={(-1.29, -1.4)}]
                    \path[point] (8.376, 25.264)arc(0.0:90.0:0.025 and -0.025)arc(90.0:180.0:0.025 and -0.025)arc(180.0:270.0:0.025 and -0.025)arc(270.0:360.0:0.025 and -0.025) -- cycle;
                    \path[point] (8.856, 25.264)arc(0.0:90.0:0.025 and -0.025)arc(90.0:180.0:0.025 and -0.025)arc(180.0:270.0:0.025 and -0.025)arc(270.0:360.0:0.025 and -0.025) -- cycle;
                    \path[point] (9.098, 24.844)arc(0.0:90.0:0.025 and -0.025)arc(90.0:180.0:0.025 and -0.025)arc(180.0:270.0:0.025 and -0.025)arc(270.0:360.0:0.025 and -0.025) -- cycle;
                    \path[point] (8.857, 24.43)arc(0.0:90.0:0.025 and -0.025)arc(90.0:180.0:0.025 and -0.025)arc(180.0:270.0:0.025 and -0.025)arc(270.0:360.0:0.025 and -0.025) -- cycle;
                    \path[point] (8.374, 24.43)arc(0.0:90.0:0.025 and -0.025)arc(90.0:180.0:0.025 and -0.025)arc(180.0:270.0:0.025 and -0.025)arc(270.0:360.0:0.025 and -0.025) -- cycle;
                    \path[point] (8.132, 24.843)arc(0.0:90.0:0.025 and -0.025)arc(90.0:180.0:0.025 and -0.025)arc(180.0:270.0:0.025 and -0.025)arc(270.0:360.0:0.025 and -0.025) -- cycle;
                    \path[point] (8.505, 24.843)arc(0.0:90.0:0.025 and -0.025)arc(90.0:180.0:0.025 and -0.025)arc(180.0:270.0:0.025 and -0.025)arc(270.0:360.0:0.025 and -0.025) -- cycle;
                    \path[point] (8.695, 24.843)arc(0.0:90.0:0.025 and -0.025)arc(90.0:180.0:0.025 and -0.025)arc(180.0:270.0:0.025 and -0.025)arc(270.0:360.0:0.025 and -0.025) -- cycle;
                    
                    \path[main_line] (8.831, 24.426) -- (8.347, 24.426) -- (8.106, 24.846) -- (8.348, 25.264) -- (8.832, 25.264) -- (9.074, 24.844) -- cycle;
                    \path[main_line] (8.48, 24.843) -- (8.67, 24.843);
                    \path[main_line] (8.348, 25.264) -- (8.48, 24.843) -- (8.347, 24.426);
                    \path[main_line] (8.832, 25.264) -- (8.67, 24.843) -- (8.831, 24.426);
                \end{scope}
                \draw[->, thick] (6.9, 24.4) to[bend right, looseness=1.0] node[midway, left] {$(1, 0)$} (6.9, 23.9);

                \draw[<-, thick] (7.7, 24.4) to[bend left, looseness=1.0] node[midway, right] {$(0, 1)$} (7.7, 23.9);
            \end{tikzpicture}
            \hfill
            \begin{tikzpicture}[scale=1.6]
                \begin{scope}
                    \path[point] (4.411, 24.2)arc(0.0:90.0:0.025 and -0.025)arc(90.0:180.0:0.025 and -0.025)arc(180.0:270.0:0.025 and -0.025)arc(270.0:360.0:0.025 and -0.025) -- cycle;
                    \path[point] (4.856, 24.2)arc(0.0:90.0:0.025 and -0.025)arc(90.0:180.0:0.025 and -0.025)arc(180.0:270.0:0.025 and -0.025)arc(270.0:360.0:0.025 and -0.025) -- cycle;
                    \path[point] (4.094, 23.884)arc(0.0:90.0:0.025 and -0.025)arc(90.0:180.0:0.025 and -0.025)arc(180.0:270.0:0.025 and -0.025)arc(270.0:360.0:0.025 and -0.025) -- cycle;
                    \path[point] (4.092, 23.432)arc(0.0:90.0:0.025 and -0.025)arc(90.0:180.0:0.025 and -0.025)arc(180.0:270.0:0.025 and -0.025)arc(270.0:360.0:0.025 and -0.025) -- cycle;
                    \path[point] (4.409, 23.119)arc(0.0:90.0:0.025 and -0.025)arc(90.0:180.0:0.025 and -0.025)arc(180.0:270.0:0.025 and -0.025)arc(270.0:360.0:0.025 and -0.025) -- cycle;
                    \path[point] (4.858, 23.121)arc(0.0:90.0:0.025 and -0.025)arc(90.0:180.0:0.025 and -0.025)arc(180.0:270.0:0.025 and -0.025)arc(270.0:360.0:0.025 and -0.025) -- cycle;
                    \path[point] (5.171, 23.433)arc(0.0:90.0:0.025 and -0.025)arc(90.0:180.0:0.025 and -0.025)arc(180.0:270.0:0.025 and -0.025)arc(270.0:360.0:0.025 and -0.025) -- cycle;
                    \path[point] (5.17, 23.881)arc(0.0:90.0:0.025 and -0.025)arc(90.0:180.0:0.025 and -0.025)arc(180.0:270.0:0.025 and -0.025)arc(270.0:360.0:0.025 and -0.025) -- cycle;
                    
                    \path[main_line,shift={(0.285, -0.4)}] (4.546, 23.519) -- (4.099, 23.519) -- (3.782, 23.835) -- (3.781, 24.283) -- (4.097, 24.599) -- (4.545, 24.6) -- (4.862, 24.284) -- (4.862, 23.836) -- cycle;
                    \path[main_line] (4.383, 24.2) -- (4.384, 23.119);
                    \path[main_line] (4.83, 24.2) -- (4.832, 23.119);
                \end{scope}
                \begin{scope}[shift={(-1.27, -1.7)}]
                    \path[point] (6.424, 23.86)arc(90.0:180.0:0.025 and -0.025)arc(180.0:270.0:0.025 and -0.025)arc(270.0:360.0:0.025 and -0.025)arc(0.0:90.0:0.025 and -0.025) -- cycle;
                    \path[point] (6.424, 23.415)arc(90.0:180.0:0.025 and -0.025)arc(180.0:270.0:0.025 and -0.025)arc(270.0:360.0:0.025 and -0.025)arc(0.0:90.0:0.025 and -0.025) -- cycle;
                    \path[point] (6.108, 24.177)arc(90.0:180.0:0.025 and -0.025)arc(180.0:270.0:0.025 and -0.025)arc(270.0:360.0:0.025 and -0.025)arc(0.0:90.0:0.025 and -0.025) -- cycle;
                    \path[point] (5.656, 24.18)arc(90.0:180.0:0.025 and -0.025)arc(180.0:270.0:0.025 and -0.025)arc(270.0:360.0:0.025 and -0.025)arc(0.0:90.0:0.025 and -0.025) -- cycle;
                    \path[point] (5.344, 23.862)arc(90.0:180.0:0.025 and -0.025)arc(180.0:270.0:0.025 and -0.025)arc(270.0:360.0:0.025 and -0.025)arc(0.0:90.0:0.025 and -0.025) -- cycle;
                    \path[point] (5.345, 23.413)arc(90.0:180.0:0.025 and -0.025)arc(180.0:270.0:0.025 and -0.025)arc(270.0:360.0:0.025 and -0.025)arc(0.0:90.0:0.025 and -0.025) -- cycle;
                    \path[point] (5.657, 23.101)arc(90.0:180.0:0.025 and -0.025)arc(180.0:270.0:0.025 and -0.025)arc(270.0:360.0:0.025 and -0.025)arc(0.0:90.0:0.025 and -0.025) -- cycle;
                    \path[point] (6.105, 23.101)arc(90.0:180.0:0.025 and -0.025)arc(180.0:270.0:0.025 and -0.025)arc(270.0:360.0:0.025 and -0.025)arc(0.0:90.0:0.025 and -0.025) -- cycle;
                    
                    \path[main_line] (5.343, 23.44) -- (5.343, 23.887) -- (5.659, 24.204) -- (6.107, 24.205) -- (6.424, 23.889) -- (6.424, 23.441) -- (6.108, 23.124) -- (5.66, 23.123) -- cycle;
                    \path[main_line] (6.424, 23.889) -- (5.343, 23.887);
                    \path[main_line] (6.424, 23.441) -- (5.343, 23.44);
                \end{scope}

                \draw[<->, thick] (4.6, 23.05) to[bend right, looseness=0.0] node[midway, left] {$(1, 1)$} (4.6, 22.55);
            \end{tikzpicture}
            \hfill
            \begin{tikzpicture}[scale=1.7]
                \begin{scope}
                    \path[point] (7.139, 24.035)arc(0.0:90.0:0.025 and -0.025)arc(90.0:180.0:0.025 and -0.025)arc(180.0:270.0:0.025 and -0.025)arc(270.0:360.0:0.025 and -0.025) -- cycle;
                    \path[point] (7.618, 24.035)arc(0.0:90.0:0.025 and -0.025)arc(90.0:180.0:0.025 and -0.025)arc(180.0:270.0:0.025 and -0.025)arc(270.0:360.0:0.025 and -0.025) -- cycle;
                    \path[point] (7.861, 23.614)arc(0.0:90.0:0.025 and -0.025)arc(90.0:180.0:0.025 and -0.025)arc(180.0:270.0:0.025 and -0.025)arc(270.0:360.0:0.025 and -0.025) -- cycle;
                    \path[point] (7.62, 23.201)arc(0.0:90.0:0.025 and -0.025)arc(90.0:180.0:0.025 and -0.025)arc(180.0:270.0:0.025 and -0.025)arc(270.0:360.0:0.025 and -0.025) -- cycle;
                    \path[point] (7.136, 23.201)arc(0.0:90.0:0.025 and -0.025)arc(90.0:180.0:0.025 and -0.025)arc(180.0:270.0:0.025 and -0.025)arc(270.0:360.0:0.025 and -0.025) -- cycle;
                    \path[point] (7.378, 23.616)arc(0.0:90.0:0.025 and -0.025)arc(90.0:180.0:0.025 and -0.025)arc(180.0:270.0:0.025 and -0.025)arc(270.0:360.0:0.025 and -0.025) -- cycle;
                    \path[point] (6.895, 23.613)arc(0.0:90.0:0.025 and -0.025)arc(90.0:180.0:0.025 and -0.025)arc(180.0:270.0:0.025 and -0.025)arc(270.0:360.0:0.025 and -0.025) -- cycle;
                    
                    \path[main_line] (7.594, 23.196) -- (7.11, 23.197) -- (6.869, 23.616) -- (7.111, 24.035) -- (7.595, 24.034) -- (7.836, 23.615) -- cycle;
                    \path[main_line] (7.111, 24.035) -- (7.352, 23.616) -- (7.11, 23.197);
                    \path[main_line] (7.836, 23.615) -- (7.352, 23.616);
                \end{scope}
                \begin{scope}[shift={(-1.2, -1.65)}]
                    \path[point] (8.342, 23.949)arc(0.0:90.0:0.025 and -0.025)arc(90.0:180.0:0.025 and -0.025)arc(180.0:270.0:0.025 and -0.025)arc(270.0:360.0:0.025 and -0.025) -- cycle;
                    \path[point] (8.822, 23.949)arc(0.0:90.0:0.025 and -0.025)arc(90.0:180.0:0.025 and -0.025)arc(180.0:270.0:0.025 and -0.025)arc(270.0:360.0:0.025 and -0.025) -- cycle;
                    \path[point] (9.064, 23.528)arc(0.0:90.0:0.025 and -0.025)arc(90.0:180.0:0.025 and -0.025)arc(180.0:270.0:0.025 and -0.025)arc(270.0:360.0:0.025 and -0.025) -- cycle;
                    \path[point] (8.824, 23.115)arc(0.0:90.0:0.025 and -0.025)arc(90.0:180.0:0.025 and -0.025)arc(180.0:270.0:0.025 and -0.025)arc(270.0:360.0:0.025 and -0.025) -- cycle;
                    \path[point] (8.34, 23.115)arc(0.0:90.0:0.025 and -0.025)arc(90.0:180.0:0.025 and -0.025)arc(180.0:270.0:0.025 and -0.025)arc(270.0:360.0:0.025 and -0.025) -- cycle;
                    \path[point] (8.581, 23.53)arc(0.0:90.0:0.025 and -0.025)arc(90.0:180.0:0.025 and -0.025)arc(180.0:270.0:0.025 and -0.025)arc(270.0:360.0:0.025 and -0.025) -- cycle;
                    \path[point] (8.098, 23.527)arc(0.0:90.0:0.025 and -0.025)arc(90.0:180.0:0.025 and -0.025)arc(180.0:270.0:0.025 and -0.025)arc(270.0:360.0:0.025 and -0.025) -- cycle;
                    
                    \path[main_line] (8.797, 23.11) -- (8.313, 23.111) -- (8.072, 23.53) -- (8.315, 23.949) -- (8.798, 23.948) -- (9.04, 23.529) -- cycle;
                    \path[main_line] (8.798, 23.948) -- (8.556, 23.53) -- (8.797, 23.11);
                    \path[main_line] (8.072, 23.53) -- (8.556, 23.53);
                \end{scope}

                \draw[->, thick] (7.35, 23.1) to[bend right, looseness=0.0] node[midway, left] {$(0, 0)$} (7.35, 22.4);
            \end{tikzpicture}
        \end{center}
        \caption{\label{Fig:2Moves}Cubical Pachner moves for dimension $n = 2$}
    \end{figure}

    There are exactly six cubical Pachner moves up to inversion in dimension $n = 3$: $(0, 0)$, $(1, 0)$ (opposite to $(0, 1)$), $(1, 1)$, $(2, 0)$ (opposite to $(0, 2)$), $(2, 1)$ (opposite to $(1, 2)$), and $(3, 0)$ (opposite to $(0, 3)$).
\end{example}

% \begin{remark}
    % There is some ambiguity in the designation of the cubical Pachner move. If it has the type $(X, X)$, then it replace one set of faces of the type $(X, X)$ to another set of faces with the same type $(X, X)$. And in this designation is not clear what the first combination of faces and what is the second one. So, in each particular case it requires more detailed description of interchangeable combinations.
% \end{remark}

In addition to local cubical Pachner moves, we will consider one non-local transformation of a cubulation --- \emph{subdivision}. It replaces every $n$-cube of the cubulation with $2^n$ cubes by splitting each interval $I = [-1, 1]$ into two intervals $[-1, 0]$ and $[0, 1]$, and taking the direct product of all these parts (see fig. \ref{Fig:Subdivision}). The inverse transformation is called \emph{de-subdivision}.

\begin{figure}[ht]
    \begin{center}
        \begin{tikzpicture}[scale=2.0]
            \begin{scope}
                \path[point] (0.529, 23.419)arc(0.0:90.0:0.025 and -0.025)arc(90.0:180.0:0.025 and -0.025)arc(180.0:270.0:0.025 and -0.025)arc(270.0:360.0:0.025 and -0.025) -- cycle;
                \path[point] (0.529, 22.632)arc(0.0:90.0:0.025 and -0.025)arc(90.0:180.0:0.025 and -0.025)arc(180.0:270.0:0.025 and -0.025)arc(270.0:360.0:0.025 and -0.025) -- cycle;
                \path[point] (1.316, 22.632)arc(0.0:90.0:0.025 and -0.025)arc(90.0:180.0:0.025 and -0.025)arc(180.0:270.0:0.025 and -0.025)arc(270.0:360.0:0.025 and -0.025) -- cycle;
                \path[point] (1.316, 23.419)arc(0.0:90.0:0.025 and -0.025)arc(90.0:180.0:0.025 and -0.025)arc(180.0:270.0:0.025 and -0.025)arc(270.0:360.0:0.025 and -0.025) -- cycle;
                
                \path[main_line] (0.504, 23.419) -- (1.291, 23.419) -- (1.291, 22.632) -- (0.504, 22.632) -- cycle;
            \end{scope}
            \begin{scope}[shift={(1.0, 0.0)}]
                \path[point] (1.725, 23.419)arc(0.0:90.0:0.025 and -0.025)arc(90.0:180.0:0.025 and -0.025)arc(180.0:270.0:0.025 and -0.025)arc(270.0:360.0:0.025 and -0.025) -- cycle;
                \path[point] (2.118, 23.419)arc(0.0:90.0:0.025 and -0.025)arc(90.0:180.0:0.025 and -0.025)arc(180.0:270.0:0.025 and -0.025)arc(270.0:360.0:0.025 and -0.025) -- cycle;
                \path[point] (1.725, 23.026)arc(0.0:90.0:0.025 and -0.025)arc(90.0:180.0:0.025 and -0.025)arc(180.0:270.0:0.025 and -0.025)arc(270.0:360.0:0.025 and -0.025) -- cycle;
                \path[point] (2.118, 22.632)arc(0.0:90.0:0.025 and -0.025)arc(90.0:180.0:0.025 and -0.025)arc(180.0:270.0:0.025 and -0.025)arc(270.0:360.0:0.025 and -0.025) -- cycle;
                \path[point] (2.512, 23.026)arc(0.0:90.0:0.025 and -0.025)arc(90.0:180.0:0.025 and -0.025)arc(180.0:270.0:0.025 and -0.025)arc(270.0:360.0:0.025 and -0.025) -- cycle;
                \path[point] (2.118, 23.026)arc(0.0:90.0:0.025 and -0.025)arc(90.0:180.0:0.025 and -0.025)arc(180.0:270.0:0.025 and -0.025)arc(270.0:360.0:0.025 and -0.025) -- cycle;
                \path[point] (1.725, 22.632)arc(0.0:90.0:0.025 and -0.025)arc(90.0:180.0:0.025 and -0.025)arc(180.0:270.0:0.025 and -0.025)arc(270.0:360.0:0.025 and -0.025) -- cycle;
                \path[point] (2.512, 22.632)arc(0.0:90.0:0.025 and -0.025)arc(90.0:180.0:0.025 and -0.025)arc(180.0:270.0:0.025 and -0.025)arc(270.0:360.0:0.025 and -0.025) -- cycle;
                \path[point] (2.512, 23.419)arc(0.0:90.0:0.025 and -0.025)arc(90.0:180.0:0.025 and -0.025)arc(180.0:270.0:0.025 and -0.025)arc(270.0:360.0:0.025 and -0.025) -- cycle;
                
                \path[main_line] (1.699, 23.419) -- (2.487, 23.419) -- (2.487, 22.632) -- (1.699, 22.632) -- cycle;
                \path[main_line] (2.093, 23.419) -- (2.093, 23.026) -- (2.093, 22.632);
                \path[main_line] (1.699, 23.026) -- (2.093, 23.026) -- (2.487, 23.026);
            \end{scope}
            \draw[->, thick] (1.5, 23.2) -- (2.5, 23.2) node[midway, above] {subdivision};
            \draw[<-, thick] (1.5, 22.8) -- (2.5, 22.8) node[midway, below] {de-subdivision};
        \end{tikzpicture}
    \end{center}
    \caption{\label{Fig:Subdivision}Subdivision/de-subdivision transformations}
\end{figure}

\begin{example}
    Subdivision replaces each 2-cube with four 2-cubes and each 3-cube with eight 3-cubes.
\end{example}

\section{Quadrangulation transformations}

\begin{lemma}
    \label{Lemma:MatchTransforms}
    Let $F$ be a surface (2-manifold), let $\mathcal{T}_1$ and $\mathcal{T}_2$ be two triangulations of $F$, and let $\mathcal{C}_1 = \beta(\mathcal{T}_1)$ and $\mathcal{C}_2 = \beta(\mathcal{T}_2)$ be quadrangulations of $F$ that match the triangulations $\mathcal{T}_1$ and $\mathcal{T}_2$ respectively. Then there exists a finite sequence of cubical Pachner moves of types $(0, 0)$, $(0, 1)$, $(1, 0)$, $(0, 2)$ and $(2, 0)$ that transforms $\mathcal{C}_1$ into $\mathcal{C}_2$.
\end{lemma}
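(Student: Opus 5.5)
Since $\mathcal{T}_1$ and $\mathcal{T}_2$ are triangulations of the same surface $F$, the classical Pachner theorem gives a finite sequence of 2-dimensional Pachner moves $\mathcal{T}_1 = \mathcal{T}^{(0)} \to \mathcal{T}^{(1)} \to \ldots \to \mathcal{T}^{(m)} = \mathcal{T}_2$. Each move is either a $2$--$2$ move (flip of a diagonal in a quadrilateral formed by two triangles), a $1$--$3$ move (starring a triangle at an interior point), or its inverse $3$--$1$. The equivalence relation on cubulations is compatible with everything, so it suffices to show the following local statement: for each single Pachner move $\mathcal{T}\to\mathcal{T}'$, the quadrangulation $\beta(\mathcal{T})$ can be transformed into $\beta(\mathcal{T}')$ by cubical moves of the allowed types. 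Because $\beta$ is defined simplex by simplex, $\beta(\mathcal{T})$ and $\beta(\mathcal{T}')$ coincide outside the union $D$ of the triangles involved in the move. Moreover, on $\partial D$ both induce the same subdivision: each boundary edge is split at its midpoint. Thus all work happens inside the disk $D$, and the moves need only be checked relative to a fixed boundary.

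For the $1$--$3$ move, $D$ is a single triangle, and $\beta$ gives $3$ squares with boundary a hexagon (the three vertices plus three edge midpoints). After starring, $D$ contains $3$ triangles, giving $9$ squares with the same hexagonal boundary. I will describe an explicit sequence realizing this. First, a $(1,0)$ move replaces a square adjacent to the center by $5$ squares. Then further $(1,0)$, $(0,0)$ and $(2,0)$ moves grow the configuration into the $9$-square picture. To find the sequence, compare the combinatorics: the new picture has the new interior vertex of valence $3$, three new edge-midpoints of valence $4$ (on the new interior edges), three triangle centers of valence $3$, and the old midpoints and vertices with updated valences. Count check: $(1,0)$ adds $4$ squares, $(2,0)$ adds $4$, and $(0,0)$ adds $0$. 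Going from $3$ to $9$ squares needs a net increase of $6$, so a combination such as two $(1,0)$'s followed by a $(0,1)$, or $(2,0)$ plus $(1,0)$ minus $(1,0)$, etc., must be used. I will draw the intermediate pictures explicitly. The $3$--$1$ move is the inverse sequence, using the inverse types, all of which lie in the allowed list.

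For the $2$--$2$ move, $D$ is a quadrilateral made of two triangles, giving $6$ squares on each side with the same octagonal boundary: four vertices and four edge midpoints. Here I plan to avoid a direct construction and reduce to the previous case. A $2$--$2$ flip is a composition of $1$--$3$ and $3$--$1$ moves: star one triangle, star the other, and so on (the standard reduction, e.g. via inserting a vertex on the diagonal, or through the sequence $1$--$3$, $2$--$2$ being expressible in terms of stellar moves). Alternatively, I will give a direct local sequence. Inside the octagon, the two configurations differ by a rotation of the central "diagonal" structure. A $(0,0)$ move on the hexagon formed by three squares around a center, followed by $(1,0)$/$(0,1)$ moves, should rotate it. The counts agree ($6$ to $6$), suggesting a short sequence like $(1,0)$, $(0,0)$, $(0,1)$.

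The main obstacle is the explicit local verification. One must exhibit concrete sequences of cubical moves inside the hexagon (for $1$--$3$) and octagon (for $2$--$2$) relative to the boundary, and check at each step that the replaced squares actually form the required configuration of faces of a 3-cube. The issue is that a move such as $(0,0)$ needs three squares around a vertex of valence $3$ forming a hexagon. I would first attempt the $1$--$3$ case by hand with pictures. If the $2$--$2$ case resists, I will fall back on expressing the flip through $1$--$3$ and $3$--$1$ moves, using the fact that in dimension $2$ stellar (bistellar $1$--$3$) moves plus flips are generated, and that a flip on edge $e$ equals: subdivide $e$ at a point (expressible as two $1$--$3$ moves followed by $3$--$1$ moves). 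If needed, I will verify the relevant edge-subdivision move also locally in $\beta$.
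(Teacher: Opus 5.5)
You have the same reduction as the paper: by Pachner's theorem it suffices to treat a single 2--2 move and a single 1--3 (or 3--1) move, and since $\beta$ is defined simplex by simplex, all the work happens in the disk $D$ relative to its boundary. That reduction is the routine part, though. The content of the lemma is the explicit local sequences of cubical moves, and you only announce them (``I will draw the intermediate pictures'', ``should rotate it'', ``suggesting a short sequence like\ldots'').

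The bookkeeping you use to search for them is also wrong. A set of type $(X,Y)$ consists of $3-X+Y$ squares, so a move of type $(X,Y)$ changes the number of squares by $2(X-Y)$:
a $(1,0)$ move replaces $2$ squares by $4$, not $1$ by $5$;
a $(2,0)$ move replaces $1$ square by $5$;
$(0,0)$ and $(1,1)$ replace $3$ squares by $3$.
With your counts every move changes the number of squares by a multiple of $4$, so the required net $+6$ would be unreachable. Indeed, none of your candidate combinations gives $+6$ under either count. The correct count leads to the paper's sequence. Your intended first step (one square replaced by five) is a $(2,0)$ move, and it is followed by a $(1,0)$ move and two $(0,0)$ moves. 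Read the other way, the paper realizes the 3--1 move by two $(0,0)$ moves (the first on the hexagon of three squares around the degree-3 star vertex), then a $(0,1)$ move, then a $(0,2)$ move.

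Your fallback for the 2--2 case fails, because a flip is not a composition of 1--3 and 3--1 moves. Relative to $\partial D$ this is immediate. The diagonal $ac$ joins two boundary vertices, a 3--1 move deletes only the three edges at an interior vertex of degree $3$, and a 1--3 move deletes no edges, so $ac$ can never disappear. Globally, the octahedron can never reach the boundary of the tetrahedron by 1--3 and 3--1 moves. As long as no original vertex has been removed, the twelve original edges persist, so every original vertex keeps degree at least $4$ and is never removed. The same objection applies to your description of edge subdivision via 1--3 and 3--1 moves.

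A direct local sequence for the 2--2 move is therefore indispensable. The paper's is:
a $(0,0)$ move;
a $(0,1)$ move arriving at the $2\times 2$ grid of four squares;
a $(1,0)$ move on a different pair of squares;
a final $(0,0)$ move.
Your guess $(1,0),(0,0),(0,1)$ is unverified. Until such sequences are written down, and each replaced region is checked to be a set of faces of a 3-cube of the stated type, the proposal does not prove the lemma.
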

\begin{remark}
    The move of type $(1, 1)$ is not required to transform any quadrangulation of the surface that matches some triangulation into any other quadrangulation that also matches some triangulation.
\end{remark}
\begin{proof} 
    It is well known that any two triangulations of the same surface can be transformed into one another by a finite sequence of three classical Pachner moves: the 2--2-move, the 1--3-move, and the 3--1-move, shown in figure \ref{Fig:SimplexMoves} (see \cite{FH,L,P,RST}). For triangulations, the moves are denoted as $x$--$y$, where $x$ is the number of triangles that are replaced by $y$ other triangles. Thus, it is sufficient to prove the lemma in the cases where $\mathcal{T}_2$ is obtained from $\mathcal{T}_1$ by either a 2--2-move or a 3--1-move.
    
    \begin{figure}[ht]
        \begin{center}
            \ \hfill
            \begin{tikzpicture}[scale=1.0]
                \begin{scope}
                    \path[point] (1.222, 22.249)arc(0.0:90.0:0.025 and -0.025)arc(90.0:180.0:0.025 and -0.025)arc(180.0:270.0:0.025 and -0.025)arc(270.0:360.0:0.025 and -0.025) -- cycle;
                    \path[point] (0.368, 21.393)arc(0.0:90.0:0.025 and -0.025)arc(90.0:180.0:0.025 and -0.025)arc(180.0:270.0:0.025 and -0.025)arc(270.0:360.0:0.025 and -0.025) -- cycle;
                    \path[point] (1.228, 20.541)arc(0.0:90.0:0.025 and -0.025)arc(90.0:180.0:0.025 and -0.025)arc(180.0:270.0:0.025 and -0.025)arc(270.0:360.0:0.025 and -0.025) -- cycle;
                    \path[point] (2.081, 21.393)arc(0.0:90.0:0.025 and -0.025)arc(90.0:180.0:0.025 and -0.025)arc(180.0:270.0:0.025 and -0.025)arc(270.0:360.0:0.025 and -0.025) -- cycle;
                    
                    \path[main_line] (1.197, 22.255) -- (2.061, 21.39) -- (1.202, 20.531) -- (0.338, 21.396) -- cycle;
                    \path[main_line] (1.197, 22.255) -- (1.202, 20.531);
                \end{scope}
                \begin{scope}[shift={(0.0, -1.0)}]
                    \path[point] (1.222, 20.195)arc(0.0:90.0:0.025 and -0.025)arc(90.0:180.0:0.025 and -0.025)arc(180.0:270.0:0.025 and -0.025)arc(270.0:360.0:0.025 and -0.025) -- cycle;
                    \path[point] (0.37, 19.342)arc(359.999:90.0:0.025 and -0.025)arc(90.0:180.001:0.025 and -0.025)arc(180.001:270.0:0.025 and -0.025)arc(270.0:359.999:0.025 and -0.025) -- cycle;
                    \path[point] (2.075, 19.337)arc(0.001:90.0:0.025 and -0.025)arc(90.0:179.999:0.025 and -0.025)arc(179.999:270.0:0.025 and -0.025)arc(270.0:0.001:0.025 and -0.025) -- cycle;
                    \path[point] (1.228, 18.484)arc(359.999:90.0:0.025 and -0.025)arc(90.0:180.001:0.025 and -0.025)arc(180.001:270.0:0.025 and -0.025)arc(270.0:359.999:0.025 and -0.025) -- cycle;
                    
                    \path[main_line] (1.197, 20.201) -- (2.061, 19.337) -- (1.202, 18.478) -- (0.338, 19.343) -- cycle;
                    \path[main_line] (0.338, 19.343) -- (2.061, 19.337);
                \end{scope}

                \draw[<->, thick] (1.2, 20.3) to[bend right, looseness=0.0] node[midway, left] {2--2} (1.2, 19.4);
            \end{tikzpicture}
            \hfill
            \begin{tikzpicture}
                \begin{scope}
                    \path[point] (2.624, 20.795)arc(0.0:90.0:0.025 and -0.025)arc(90.0:180.0:0.025 and -0.025)arc(180.0:270.0:0.025 and -0.025)arc(270.0:360.0:0.025 and -0.025) -- cycle;
                    \path[point] (4.265, 20.793)arc(0.0:90.0:0.025 and -0.025)arc(90.0:180.0:0.025 and -0.025)arc(180.0:270.0:0.025 and -0.025)arc(270.0:360.0:0.025 and -0.025) -- cycle;
                    \path[point] (3.446, 22.21)arc(0.0:90.0:0.025 and -0.025)arc(90.0:180.0:0.025 and -0.025)arc(180.0:270.0:0.025 and -0.025)arc(270.0:360.0:0.025 and -0.025) -- cycle;
                    
                    \path[main_line,cm={ 1.114,-0.0,-0.0,1.114,(-0.29, -3.319)}] (2.589, 21.643) -- (3.33, 22.928) -- (4.072, 21.643) -- cycle;
                \end{scope}
                \begin{scope}[shift={(0.0, -1.0)}]
                    \path[point] (3.446, 19.186)arc(0.001:90.0:0.025 and -0.025)arc(90.0:179.999:0.025 and -0.025)arc(179.999:270.0:0.025 and -0.025)arc(270.0:0.001:0.025 and -0.025) -- cycle;
                    \path[point] (2.623, 18.654)arc(359.999:90.0:0.025 and -0.025)arc(90.0:180.001:0.025 and -0.025)arc(180.001:270.0:0.025 and -0.025)arc(270.0:359.999:0.025 and -0.025) -- cycle;
                    \path[point] (3.447, 20.074)arc(0.0:90.0:0.025 and -0.025)arc(90.0:180.0:0.025 and -0.025)arc(180.0:270.0:0.025 and -0.025)arc(270.0:360.0:0.025 and -0.025) -- cycle;
                    \path[point] (4.265, 18.657)arc(0.001:90.0:0.025 and -0.025)arc(90.0:179.999:0.025 and -0.025)arc(179.999:270.0:0.025 and -0.025)arc(270.0:0.001:0.025 and -0.025) -- cycle;
                    
                    \path[main_line] (2.595, 18.651) -- (3.421, 20.082) -- (4.247, 18.651) -- cycle;
                    \path[main_line] (2.595, 18.651) -- (3.421, 19.186);
                    \path[main_line] (3.421, 20.082) -- (3.421, 19.186);
                    \path[main_line] (4.235, 18.657) -- (3.421, 19.186);
                \end{scope}

                \draw[->, thick] (2.9, 20.5) to[bend right, looseness=1.0] node[midway, left] {1--3} (2.9, 19.0);

                \draw[<-, thick] (3.9, 20.5) to[bend left, looseness=1.0] node[midway, right] {3--1} (3.9, 19.0);
            \end{tikzpicture}
            \hfill \ \ 
        \end{center}
        \caption{\label{Fig:SimplexMoves}Classical Pachner moves for triangulations of surfaces}
    \end{figure}

    \emph{2--2-move.} In this case, quadrangulation $\mathcal{C}_1$ can be transformed into quadrangulation $\mathcal{C}_2$ by a sequence of four moves: we start with a move of type $(0, 0)$, then apply a move of type $(0, 1)$, next apply the inverse of a move of type $(1, 0)$ in another region, and finally apply a move of type $(0, 0)$ (see fig. \ref{Fig:MatchedMoves22}, where the regions in which each transformation is applied are shaded gray).

    \begin{figure}[ht]
        \begin{center}
            \begin{tikzpicture}[scale=1.5]
                % \def\R{2}
                % \coordinate (A) at ({cos(126)*\R}, {sin(126)*\R});
                % \coordinate (B) at ({cos(54)*\R},  {sin(54)*\R});
                % \coordinate (C) at ({cos(342)*\R}, {sin(342)*\R});
                % \coordinate (D) at ({cos(270)*\R}, {sin(270)*\R});
                % \coordinate (E) at ({cos(198)*\R}, {sin(198)*\R});

                % \draw[thick] (A) -- (B) -- (C) -- (D) -- (E) -- cycle;

                \begin{scope}[shift={(-6.55, -20.3)}]
                    \path[point] (5.4, 22.807)arc(0.0:90.0:0.025 and -0.025)arc(90.0:180.0:0.025 and -0.025)arc(180.0:270.0:0.025 and -0.025)arc(270.0:360.0:0.025 and -0.025) -- cycle;
                    \path[point] (4.962, 22.377)arc(0.0:90.0:0.025 and -0.025)arc(90.0:180.0:0.025 and -0.025)arc(180.0:270.0:0.025 and -0.025)arc(270.0:360.0:0.025 and -0.025) -- cycle;
                    \path[point] (4.535, 21.941)arc(0.0:90.0:0.025 and -0.025)arc(90.0:180.0:0.025 and -0.025)arc(180.0:270.0:0.025 and -0.025)arc(270.0:360.0:0.025 and -0.025) -- cycle;
                    \path[point] (5.181, 21.925)arc(0.0:90.0:0.025 and -0.025)arc(90.0:180.0:0.025 and -0.025)arc(180.0:270.0:0.025 and -0.025)arc(270.0:360.0:0.025 and -0.025) -- cycle;

                    \path[main_line] (5.374, 22.816) -- (6.249, 21.941) -- (5.374, 21.066) -- (4.5, 21.941) -- cycle;
                    \path[fill=gray, fill opacity=0.4] (5.374, 22.816) -- (5.374, 21.066) -- (6.249, 21.941) -- (5.374, 22.816);
                    \path[main_line] (5.374, 22.816) -- (5.374, 21.066);
                    \path[main_line] (5.591, 21.941) -- (5.16, 21.925) -- (4.937, 22.378);
                    \path[main_line] (5.16, 21.925) -- (4.937, 21.504);
                    \path[main_line] (5.812, 22.378) -- (5.591, 21.941) -- (5.812, 21.504);
                    \path[point] (5.4, 21.932)arc(0.0:90.0:0.025 and -0.025)arc(90.0:180.0:0.025 and -0.025)arc(180.0:270.0:0.025 and -0.025)arc(270.0:360.0:0.025 and -0.025) -- cycle;
                    \path[point] (4.961, 21.507)arc(0.0:90.0:0.025 and -0.025)arc(90.0:180.0:0.025 and -0.025)arc(180.0:270.0:0.025 and -0.025)arc(270.0:360.0:0.025 and -0.025) -- cycle;
                    \path[point] (5.4, 21.078)arc(0.0:90.0:0.025 and -0.025)arc(90.0:180.0:0.025 and -0.025)arc(180.0:270.0:0.025 and -0.025)arc(270.0:360.0:0.025 and -0.025) -- cycle;
                    \path[point] (5.837, 21.508)arc(0.0:90.0:0.025 and -0.025)arc(90.0:180.0:0.025 and -0.025)arc(180.0:270.0:0.025 and -0.025)arc(270.0:360.0:0.025 and -0.025) -- cycle;
                    \path[point] (5.618, 21.941)arc(0.0:90.0:0.025 and -0.025)arc(90.0:180.0:0.025 and -0.025)arc(180.0:270.0:0.025 and -0.025)arc(270.0:360.0:0.025 and -0.025) -- cycle;
                    \path[point] (6.271, 21.941)arc(0.0:90.0:0.025 and -0.025)arc(90.0:180.0:0.025 and -0.025)arc(180.0:270.0:0.025 and -0.025)arc(270.0:360.0:0.025 and -0.025) -- cycle;
                    \path[point] (5.837, 22.377)arc(0.0:90.0:0.025 and -0.025)arc(90.0:180.0:0.025 and -0.025)arc(180.0:270.0:0.025 and -0.025)arc(270.0:360.0:0.025 and -0.025) -- cycle;
                \end{scope}

                \begin{scope}[shift={(-9.3, -22.6)}]
                    \path[fill=gray, fill opacity=0.4] (6.663, 22.562) -- (6.42, 21.943) -- (6.685, 21.335) -- (7.304, 21.092) -- (7.662, 21.976) -- (7.272, 22.827) -- cycle;
                    \path[main_line,cm={ 0.938,0.27,-0.27,0.938,(7.588, -0.709)}] (6.573, 21.634) -- (5.898, 21.545) -- (5.359, 21.96) -- (5.27, 22.634) -- (5.684, 23.174) -- (6.358, 23.263) -- (6.898, 22.849) -- (6.987, 22.174) -- cycle;
                    \path[main_line] (6.663, 22.562) -- (6.893, 21.949) -- (6.685, 21.335);
                    \path[main_line] (7.272, 22.827) -- (7.035, 21.948) -- (7.304, 21.092);
                    \path[main_line] (8.155, 21.975) -- (7.662, 21.976);
                    \path[main_line] (7.272, 22.827) -- (7.662, 21.976) -- (7.304, 21.092);
                    \path[main_line] (6.893, 21.949) -- (7.035, 21.948);
                    \path[point] (6.449, 21.941)arc(0.0:90.0:0.025 and -0.025)arc(90.0:180.0:0.025 and -0.025)arc(180.0:270.0:0.025 and -0.025)arc(270.0:360.0:0.025 and -0.025) -- cycle;
                    \path[point] (6.689, 22.557)arc(0.0:90.0:0.025 and -0.025)arc(90.0:180.0:0.025 and -0.025)arc(180.0:270.0:0.025 and -0.025)arc(270.0:360.0:0.025 and -0.025) -- cycle;
                    \path[point] (7.298, 22.825)arc(0.0:90.0:0.025 and -0.025)arc(90.0:180.0:0.025 and -0.025)arc(180.0:270.0:0.025 and -0.025)arc(270.0:360.0:0.025 and -0.025) -- cycle;
                    \path[point] (6.707, 21.341)arc(0.0:90.0:0.025 and -0.025)arc(90.0:180.0:0.025 and -0.025)arc(180.0:270.0:0.025 and -0.025)arc(270.0:360.0:0.025 and -0.025) -- cycle;
                    \path[point] (7.33, 21.097)arc(0.0:90.0:0.025 and -0.025)arc(90.0:180.0:0.025 and -0.025)arc(180.0:270.0:0.025 and -0.025)arc(270.0:360.0:0.025 and -0.025) -- cycle;
                    \path[point] (7.936, 21.359)arc(0.0:90.0:0.025 and -0.025)arc(90.0:180.0:0.025 and -0.025)arc(180.0:270.0:0.025 and -0.025)arc(270.0:360.0:0.025 and -0.025) -- cycle;
                    \path[point] (8.174, 21.975)arc(0.0:90.0:0.025 and -0.025)arc(90.0:180.0:0.025 and -0.025)arc(180.0:270.0:0.025 and -0.025)arc(270.0:360.0:0.025 and -0.025) -- cycle;
                    \path[point] (7.916, 22.581)arc(0.0:90.0:0.025 and -0.025)arc(90.0:180.0:0.025 and -0.025)arc(180.0:270.0:0.025 and -0.025)arc(270.0:360.0:0.025 and -0.025) -- cycle;
                    \path[point] (6.918, 21.948)arc(0.0:90.0:0.025 and -0.025)arc(90.0:180.0:0.025 and -0.025)arc(180.0:270.0:0.025 and -0.025)arc(270.0:360.0:0.025 and -0.025) -- cycle;
                    \path[point] (7.058, 21.948)arc(0.0:90.0:0.025 and -0.025)arc(90.0:180.0:0.025 and -0.025)arc(180.0:270.0:0.025 and -0.025)arc(270.0:360.0:0.025 and -0.025) -- cycle;
                    \path[point] (7.685, 21.976)arc(0.0:90.0:0.025 and -0.025)arc(90.0:180.0:0.025 and -0.025)arc(180.0:270.0:0.025 and -0.025)arc(270.0:360.0:0.025 and -0.025) -- cycle;
                \end{scope}

                \begin{scope}[shift={(-9.35, -24)}]
                    \path[fill=gray, fill opacity=0.4] (8.47, 22.841) -- (8.47, 21.984) -- (10.183, 21.984) -- (10.183, 22.841) -- (8.47, 22.841);
                    \path[main_line] (8.47, 22.841) -- (10.183, 22.841) -- (10.183, 21.128) -- (8.47, 21.128) -- cycle;
                    \path[main_line] (9.327, 22.841) -- (9.327, 21.128);
                    \path[main_line] (8.47, 21.984) -- (10.183, 21.984);
                    \path[point] (8.495, 22.839)arc(0.0:90.0:0.025 and -0.025)arc(90.0:180.0:0.025 and -0.025)arc(180.0:270.0:0.025 and -0.025)arc(270.0:360.0:0.025 and -0.025) -- cycle;
                    \path[point] (8.496, 21.984)arc(0.0:90.0:0.025 and -0.025)arc(90.0:180.0:0.025 and -0.025)arc(180.0:270.0:0.025 and -0.025)arc(270.0:360.0:0.025 and -0.025) -- cycle;
                    \path[point] (8.498, 21.129)arc(0.0:90.0:0.025 and -0.025)arc(90.0:180.0:0.025 and -0.025)arc(180.0:270.0:0.025 and -0.025)arc(270.0:360.0:0.025 and -0.025) -- cycle;
                    \path[point] (9.352, 21.13)arc(0.0:90.0:0.025 and -0.025)arc(90.0:180.0:0.025 and -0.025)arc(180.0:270.0:0.025 and -0.025)arc(270.0:360.0:0.025 and -0.025) -- cycle;
                    \path[point] (10.204, 21.131)arc(0.0:89.999:0.025 and -0.025)arc(89.999:180.0:0.025 and -0.025)arc(180.0:270.001:0.025 and -0.025)arc(270.001:360.0:0.025 and -0.025) -- cycle;
                    \path[point] (10.204, 21.984)arc(0.0:89.999:0.025 and -0.025)arc(89.999:180.0:0.025 and -0.025)arc(180.0:270.001:0.025 and -0.025)arc(270.001:360.0:0.025 and -0.025) -- cycle;
                    \path[point] (9.35, 21.984)arc(0.0:90.0:0.025 and -0.025)arc(90.0:180.0:0.025 and -0.025)arc(180.0:270.0:0.025 and -0.025)arc(270.0:360.0:0.025 and -0.025) -- cycle;
                    \path[point] (9.352, 22.84)arc(0.0:90.0:0.025 and -0.025)arc(90.0:180.0:0.025 and -0.025)arc(180.0:270.0:0.025 and -0.025)arc(270.0:360.0:0.025 and -0.025) -- cycle;
                    \path[point] (10.206, 22.837)arc(0.0:89.999:0.025 and -0.025)arc(89.999:180.0:0.025 and -0.025)arc(180.0:270.001:0.025 and -0.025)arc(270.001:360.0:0.025 and -0.025) -- cycle;
                \end{scope}

                \begin{scope}[shift={(-9.6, -22.6)}]
                    \path[fill=gray, fill opacity=0.4] (10.701, 21.962) -- (10.967, 21.353) -- (11.585, 21.11) -- (12.194, 21.375) -- (12.437, 21.993) -- (11.597, 22.212) -- cycle;
                    \path[main_line,cm={ 0.938,0.27,-0.27,0.938,(11.87, -0.691)}] (6.573, 21.634) -- (5.898, 21.545) -- (5.359, 21.96) -- (5.27, 22.634) -- (5.684, 23.174) -- (6.358, 23.263) -- (6.898, 22.849) -- (6.987, 22.174) -- cycle;
                    \path[main_line] (11.585, 21.11) -- (11.585, 21.64);
                    \path[main_line] (10.701, 21.962) -- (11.585, 21.64) -- (12.437, 21.993);
                    \path[main_line] (10.944, 22.58) -- (11.597, 22.358) -- (12.171, 22.602);
                    \path[main_line] (10.701, 21.962) -- (11.597, 22.212) -- (12.437, 21.993);
                    \path[main_line] (11.597, 22.358) -- (11.597, 22.212);
                    \path[point] (10.979, 22.58)arc(0.0:90.001:0.025 and -0.025)arc(90.001:180.0:0.025 and -0.025)arc(180.0:269.999:0.025 and -0.025)arc(269.999:360.0:0.025 and -0.025) -- cycle;
                    \path[point] (11.575, 22.844)arc(0.0:90.001:0.025 and -0.025)arc(90.001:180.0:0.025 and -0.025)arc(180.0:269.999:0.025 and -0.025)arc(269.999:360.0:0.025 and -0.025) -- cycle;
                    \path[point] (12.188, 22.6)arc(0.0:89.999:0.025 and -0.025)arc(89.999:180.0:0.025 and -0.025)arc(180.0:270.001:0.025 and -0.025)arc(270.001:360.0:0.025 and -0.025) -- cycle;
                    \path[point] (12.456, 21.996)arc(0.0:90.001:0.025 and -0.025)arc(90.001:180.0:0.025 and -0.025)arc(180.0:269.999:0.025 and -0.025)arc(269.999:360.0:0.025 and -0.025) -- cycle;
                    \path[point] (12.215, 21.377)arc(0.0:90.001:0.025 and -0.025)arc(90.001:180.0:0.025 and -0.025)arc(180.0:269.999:0.025 and -0.025)arc(269.999:360.0:0.025 and -0.025) -- cycle;
                    \path[point] (11.611, 21.114)arc(0.0:89.999:0.025 and -0.025)arc(89.999:180.0:0.025 and -0.025)arc(180.0:270.001:0.025 and -0.025)arc(270.001:360.0:0.025 and -0.025) -- cycle;
                    \path[point] (10.993, 21.353)arc(0.0:90.001:0.025 and -0.025)arc(90.001:180.0:0.025 and -0.025)arc(180.0:269.999:0.025 and -0.025)arc(269.999:360.0:0.025 and -0.025) -- cycle;
                    \path[point] (10.734, 21.962)arc(0.0:89.999:0.025 and -0.025)arc(89.999:180.0:0.025 and -0.025)arc(180.0:270.001:0.025 and -0.025)arc(270.001:360.0:0.025 and -0.025) -- cycle;
                    \path[point] (11.622, 22.357)arc(0.0:89.999:0.025 and -0.025)arc(89.999:180.0:0.025 and -0.025)arc(180.0:270.001:0.025 and -0.025)arc(270.001:360.0:0.025 and -0.025) -- cycle;
                    \path[point] (11.623, 22.212)arc(0.0:89.999:0.025 and -0.025)arc(89.999:180.0:0.025 and -0.025)arc(180.0:270.001:0.025 and -0.025)arc(270.001:360.0:0.025 and -0.025) -- cycle;
                    \path[point] (11.61, 21.638)arc(0.0:90.001:0.025 and -0.025)arc(90.001:180.0:0.025 and -0.025)arc(180.0:269.999:0.025 and -0.025)arc(269.999:360.0:0.025 and -0.025) -- cycle;
                \end{scope}

                \begin{scope}[shift={(-12.6, -20.36)}]
                    \path[main_line] (13.758, 22.869) -- (14.633, 21.994) -- (13.758, 21.119) -- (12.884, 21.994) -- cycle;
                    \path[main_line] (12.884, 21.994) -- (14.633, 21.994);
                    \path[main_line] (13.321, 22.431) -- (13.758, 22.225) -- (14.196, 22.431);
                    \path[main_line] (13.321, 21.557) -- (13.758, 21.713).. controls (14.222, 21.547) and (14.041, 21.612) .. (14.196, 21.557);
                    \path[main_line] (13.758, 22.225) -- (13.758, 21.713);
                    \path[point] (13.782, 22.863)arc(0.0:89.999:0.025 and -0.025)arc(89.999:180.0:0.025 and -0.025)arc(180.0:270.001:0.025 and -0.025)arc(270.001:360.0:0.025 and -0.025) -- cycle;
                    \path[point] (13.345, 22.433)arc(0.0:89.999:0.025 and -0.025)arc(89.999:180.0:0.025 and -0.025)arc(180.0:270.001:0.025 and -0.025)arc(270.001:360.0:0.025 and -0.025) -- cycle;
                    \path[point] (12.918, 21.997)arc(0.0:89.999:0.025 and -0.025)arc(89.999:180.0:0.025 and -0.025)arc(180.0:270.001:0.025 and -0.025)arc(270.001:360.0:0.025 and -0.025) -- cycle;
                    \path[point] (13.349, 21.557)arc(0.0:90.001:0.025 and -0.025)arc(90.001:180.0:0.025 and -0.025)arc(180.0:269.999:0.025 and -0.025)arc(269.999:360.0:0.025 and -0.025) -- cycle;
                    \path[point] (13.785, 21.125)arc(0.0:89.999:0.025 and -0.025)arc(89.999:180.0:0.025 and -0.025)arc(180.0:270.001:0.025 and -0.025)arc(270.001:360.0:0.025 and -0.025) -- cycle;
                    \path[point] (14.216, 21.557)arc(0.0:90.001:0.025 and -0.025)arc(90.001:180.0:0.025 and -0.025)arc(180.0:269.999:0.025 and -0.025)arc(269.999:360.0:0.025 and -0.025) -- cycle;
                    \path[point] (13.784, 22.227)arc(0.0:90.001:0.025 and -0.025)arc(90.001:180.0:0.025 and -0.025)arc(180.0:269.999:0.025 and -0.025)arc(269.999:360.0:0.025 and -0.025) -- cycle;
                    \path[point] (13.783, 21.996)arc(0.0:90.001:0.025 and -0.025)arc(90.001:180.0:0.025 and -0.025)arc(180.0:269.999:0.025 and -0.025)arc(269.999:360.0:0.025 and -0.025) -- cycle;
                    \path[point] (13.781, 21.71)arc(0.0:90.001:0.025 and -0.025)arc(90.001:180.0:0.025 and -0.025)arc(180.0:269.999:0.025 and -0.025)arc(269.999:360.0:0.025 and -0.025) -- cycle;
                    \path[point] (14.654, 21.997)arc(0.0:90.001:0.025 and -0.025)arc(90.001:180.0:0.025 and -0.025)arc(180.0:269.999:0.025 and -0.025)arc(269.999:360.0:0.025 and -0.025) -- cycle;
                \end{scope}

                \draw[->, thick] (-1.8, 1.1) to[bend right] node[midway, left] {$(0, 0)$} (-2.2, 0.3);

                \draw[->, thick] (-1.6, -1.5) to[bend right] node[midway, left] {$(0, 1)$} (-1.1, -2.2);

                \draw[->, thick] (1.1, -2.2) to[bend right] node[midway, right] {$(1, 0)$} (1.6, -1.5);
                
                \draw[->, thick] (2.2, 0.3) to[bend right] node[midway, right] {$(0, 0)$} (1.8, 1.1);
            \end{tikzpicture}
        \end{center}
        \caption{\label{Fig:MatchedMoves22}Transformations of the quadrangulation to implement the 2--2-move}
    \end{figure}

    \emph{3--1-move.} In this case, the required sequence is shown in fig. \ref{Fig:MatchMoves31}. It also contains four moves: we start with a sequence of two moves of type $(0, 0)$, then a move of type $(0, 1)$, and finally remove the square by a move of type $(0, 2)$.

    \begin{figure}[ht]
        \begin{center}
            \begin{tikzpicture}[scale=1.5]
                % \def\R{2.5}
                % \coordinate (A) at ({cos(126)*\R}, {sin(126)*\R});
                % \coordinate (B) at ({cos(54)*\R},  {sin(54)*\R});
                % \coordinate (C) at ({cos(342)*\R}, {sin(342)*\R});
                % \coordinate (D) at ({cos(270)*\R}, {sin(270)*\R});
                % \coordinate (E) at ({cos(198)*\R}, {sin(198)*\R});

                % \draw[thick] (A) -- (B) -- (C) -- (D) -- (E) -- cycle;

                \begin{scope}[shift={(-7.07, -17.45)}]
                    \path[fill=gray, fill opacity=0.4] (5.631, 20.069) -- (5.216, 19.634) -- (5.074, 19.104) -- (5.631, 18.922) -- (6.188, 19.104) -- (6.047, 19.634) -- (5.631, 20.069);
                    \path[point] (5.655, 20.667)arc(0.0:90.0:0.025 and -0.025)arc(90.0:180.0:0.025 and -0.025)arc(180.0:270.0:0.025 and -0.025)arc(270.0:360.0:0.025 and -0.025) -- cycle;
                    \path[point] (5.101, 19.708)arc(360.0:90.0:0.025 and -0.025)arc(90.0:180.0:0.025 and -0.025)arc(180.0:270.0:0.025 and -0.025)arc(270.0:360.0:0.025 and -0.025) -- cycle;
                    \path[point] (4.546, 18.747)arc(0.001:90.0:0.025 and -0.025)arc(90.0:179.999:0.025 and -0.025)arc(179.999:270.0:0.025 and -0.025)arc(270.0:0.001:0.025 and -0.025) -- cycle;
                    \path[point] (6.763, 18.749)arc(0.001:90.0:0.025 and -0.025)arc(90.0:179.999:0.025 and -0.025)arc(179.999:270.0:0.025 and -0.025)arc(270.0:0.001:0.025 and -0.025) -- cycle;
                    \path[point] (5.657, 18.744)arc(0.001:90.0:0.025 and -0.025)arc(90.0:179.999:0.025 and -0.025)arc(179.999:270.0:0.025 and -0.025)arc(270.0:0.001:0.025 and -0.025) -- cycle;
                    \path[point] (5.101, 19.11)arc(359.999:90.0:0.025 and -0.025)arc(90.0:180.001:0.025 and -0.025)arc(180.001:270.0:0.025 and -0.025)arc(270.0:359.999:0.025 and -0.025) -- cycle;
                    \path[point] (5.658, 18.921)arc(0.001:90.0:0.025 and -0.025)arc(90.0:179.999:0.025 and -0.025)arc(179.999:270.0:0.025 and -0.025)arc(270.0:0.001:0.025 and -0.025) -- cycle;
                    \path[point] (6.203, 19.107)arc(0.001:90.0:0.025 and -0.025)arc(90.0:179.999:0.025 and -0.025)arc(179.999:270.0:0.025 and -0.025)arc(270.0:0.001:0.025 and -0.025) -- cycle;
                    \path[point] (6.213, 19.707)arc(0.001:90.0:0.025 and -0.025)arc(90.0:179.999:0.025 and -0.025)arc(179.999:270.0:0.025 and -0.025)arc(270.0:0.001:0.025 and -0.025) -- cycle;
                    \path[point] (6.074, 19.633)arc(0.001:90.0:0.025 and -0.025)arc(90.0:179.999:0.025 and -0.025)arc(179.999:270.0:0.025 and -0.025)arc(270.0:0.001:0.025 and -0.025) -- cycle;
                    \path[point] (5.657, 20.057)arc(0.001:90.0:0.025 and -0.025)arc(90.0:179.999:0.025 and -0.025)arc(179.999:270.0:0.025 and -0.025)arc(270.0:0.001:0.025 and -0.025) -- cycle;
                    \path[point] (5.242, 19.633)arc(0.001:90.0:0.025 and -0.025)arc(90.0:179.999:0.025 and -0.025)arc(179.999:270.0:0.025 and -0.025)arc(270.0:0.001:0.025 and -0.025) -- cycle;
                    \path[point] (5.657, 19.458)arc(0.001:90.0:0.025 and -0.025)arc(90.0:179.999:0.025 and -0.025)arc(179.999:270.0:0.025 and -0.025)arc(270.0:0.001:0.025 and -0.025) -- cycle;
                    \path[main_line,cm={ 0.931,-0.0,-0.0,0.931,(0.134, 0.445)}] (4.709, 19.649) -- (5.907, 21.723) -- (7.104, 19.649) -- cycle;
                    \path[main_line] (4.517, 18.742) -- (5.631, 19.466) -- (5.631, 20.672);
                    \path[main_line] (5.631, 19.466) -- (6.746, 18.742);
                    \path[main_line] (5.074, 19.707) -- (5.216, 19.634) -- (5.074, 19.104);
                    \path[main_line] (5.216, 19.634) -- (5.631, 20.069);
                    \path[main_line] (6.188, 19.707) -- (6.047, 19.634) -- (6.188, 19.104);
                    \path[main_line] (6.047, 19.634) -- (5.631, 20.069);
                    \path[main_line] (5.074, 19.104) -- (5.631, 18.922) -- (6.188, 19.104);
                    \path[main_line] (5.631, 18.922) -- (5.631, 18.742);
                \end{scope}

                \begin{scope}[shift={(-10.5, -20.5)}]
                    \path[fill=gray, fill opacity=0.4] (7.201, 20.285) -- (8.172, 19.724) -- (9.144, 20.285) -- (8.172, 20.846) -- cycle;
                    \path[point] (8.196, 18.61)arc(0.001:90.0:0.025 and -0.025)arc(90.0:179.999:0.025 and -0.025)arc(179.999:270.0:0.025 and -0.025)arc(270.0:0.001:0.025 and -0.025) -- cycle;
                    \path[point] (7.233, 19.165)arc(0.001:90.0:0.025 and -0.025)arc(90.0:179.999:0.025 and -0.025)arc(179.999:270.0:0.025 and -0.025)arc(270.0:0.001:0.025 and -0.025) -- cycle;
                    \path[point] (7.697, 20.013)arc(0.001:90.0:0.025 and -0.025)arc(90.0:179.999:0.025 and -0.025)arc(179.999:270.0:0.025 and -0.025)arc(270.0:0.001:0.025 and -0.025) -- cycle;
                    \path[point] (8.196, 20.299)arc(0.001:90.0:0.025 and -0.025)arc(90.0:179.999:0.025 and -0.025)arc(179.999:270.0:0.025 and -0.025)arc(270.0:0.001:0.025 and -0.025) -- cycle;
                    \path[point] (8.695, 20.012)arc(0.001:90.0:0.025 and -0.025)arc(90.0:179.999:0.025 and -0.025)arc(179.999:270.0:0.025 and -0.025)arc(270.0:0.001:0.025 and -0.025) -- cycle;
                    \path[point] (8.199, 19.726)arc(0.001:90.0:0.025 and -0.025)arc(90.0:179.999:0.025 and -0.025)arc(179.999:270.0:0.025 and -0.025)arc(270.0:0.001:0.025 and -0.025) -- cycle;
                    \path[point] (7.691, 19.432)arc(0.001:90.0:0.025 and -0.025)arc(90.0:179.999:0.025 and -0.025)arc(179.999:270.0:0.025 and -0.025)arc(270.0:0.001:0.025 and -0.025) -- cycle;
                    \path[point] (8.197, 19.146)arc(0.001:90.0:0.025 and -0.025)arc(90.0:179.999:0.025 and -0.025)arc(179.999:270.0:0.025 and -0.025)arc(270.0:0.001:0.025 and -0.025) -- cycle;
                    \path[point] (8.7, 19.428)arc(0.001:90.0:0.025 and -0.025)arc(90.0:179.999:0.025 and -0.025)arc(179.999:270.0:0.025 and -0.025)arc(270.0:0.001:0.025 and -0.025) -- cycle;
                    \path[point] (7.229, 20.282)arc(0.001:90.0:0.025 and -0.025)arc(90.0:179.999:0.025 and -0.025)arc(179.999:270.0:0.025 and -0.025)arc(270.0:0.001:0.025 and -0.025) -- cycle;
                    \path[point] (8.197, 20.841)arc(0.001:90.0:0.025 and -0.025)arc(90.0:179.999:0.025 and -0.025)arc(179.999:270.0:0.025 and -0.025)arc(270.0:0.001:0.025 and -0.025) -- cycle;
                    \path[point] (9.163, 20.282)arc(0.001:90.0:0.025 and -0.025)arc(90.0:179.999:0.025 and -0.025)arc(179.999:270.0:0.025 and -0.025)arc(270.0:0.001:0.025 and -0.025) -- cycle;
                    \path[point] (9.161, 19.168)arc(0.001:90.0:0.025 and -0.025)arc(90.0:179.999:0.025 and -0.025)arc(179.999:270.0:0.025 and -0.025)arc(270.0:0.001:0.025 and -0.025) -- cycle;
                    \path[main_line,cm={ 0.819,-0.473,0.473,0.819,(-7.519, 6.743)}] (8.09, 19.16) -- (6.905, 19.16) -- (6.312, 20.187) -- (6.905, 21.213) -- (8.09, 21.213) -- (8.683, 20.187) -- cycle;
                    \path[main_line,cm={ 0.426,-0.246,0.246,0.426,(0.012, 12.966)}] (8.09, 19.16) -- (6.905, 19.16) -- (6.312, 20.187) -- (6.905, 21.213) -- (8.09, 21.213) -- (8.683, 20.187) -- cycle;
                    \path[main_line] (7.201, 20.285) -- (8.172, 19.724);
                    \path[main_line] (8.172, 18.602) -- (8.172, 19.724);
                    \path[main_line] (9.144, 20.285) -- (8.172, 19.724);
                    \path[main_line] (8.172, 20.307) -- (8.172, 20.846);
                    \path[main_line] (7.667, 19.432) -- (7.201, 19.163);
                    \path[main_line] (8.677, 19.432) -- (9.144, 19.163);
                \end{scope}

                \begin{scope}[shift={(-10.62, -22.0)}]
                    \path[fill=gray, fill opacity=0.4] (10.62, 20.042) -- (10.62, 18.602) -- (11.592, 19.163) -- (11.592, 20.285) -- cycle;
                    \path[point] (9.678, 20.282)arc(0.001:90.0:0.025 and -0.025)arc(90.0:179.999:0.025 and -0.025)arc(179.999:270.0:0.025 and -0.025)arc(270.0:0.001:0.025 and -0.025) -- cycle;
                    \path[point] (10.643, 20.843)arc(0.001:89.999:0.025 and -0.025)arc(89.999:179.999:0.025 and -0.025)arc(179.999:270.001:0.025 and -0.025)arc(270.001:0.001:0.025 and -0.025) -- cycle;
                    \path[point] (10.646, 18.61)arc(0.001:90.001:0.025 and -0.025)arc(90.001:179.999:0.025 and -0.025)arc(179.999:269.999:0.025 and -0.025)arc(269.999:0.001:0.025 and -0.025) -- cycle;
                    \path[point] (9.681, 19.166)arc(0.001:90.0:0.025 and -0.025)arc(90.0:179.999:0.025 and -0.025)arc(179.999:270.0:0.025 and -0.025)arc(270.0:0.001:0.025 and -0.025) -- cycle;
                    \path[point] (10.645, 20.04)arc(0.001:90.001:0.025 and -0.025)arc(90.001:179.999:0.025 and -0.025)arc(179.999:269.999:0.025 and -0.025)arc(269.999:0.001:0.025 and -0.025) -- cycle;
                    \path[point] (10.146, 19.685)arc(0.001:90.001:0.025 and -0.025)arc(90.001:179.999:0.025 and -0.025)arc(179.999:269.999:0.025 and -0.025)arc(269.999:0.001:0.025 and -0.025) -- cycle;
                    \path[point] (10.145, 19.357)arc(0.001:90.001:0.025 and -0.025)arc(90.001:179.999:0.025 and -0.025)arc(179.999:269.999:0.025 and -0.025)arc(269.999:0.001:0.025 and -0.025) -- cycle;
                    \path[point] (10.646, 19.037)arc(0.001:90.001:0.025 and -0.025)arc(90.001:179.999:0.025 and -0.025)arc(179.999:269.999:0.025 and -0.025)arc(269.999:0.001:0.025 and -0.025) -- cycle;
                    \path[point] (11.143, 19.358)arc(0.001:90.001:0.025 and -0.025)arc(90.001:179.999:0.025 and -0.025)arc(179.999:269.999:0.025 and -0.025)arc(269.999:0.001:0.025 and -0.025) -- cycle;
                    \path[point] (11.144, 19.683)arc(0.001:89.999:0.025 and -0.025)arc(89.999:179.999:0.025 and -0.025)arc(179.998:270.001:0.025 and -0.025)arc(270.001:0.002:0.025 and -0.025) -- cycle;
                    \path[point] (10.644, 19.807)arc(0.001:89.999:0.025 and -0.025)arc(89.999:179.999:0.025 and -0.025)arc(179.999:270.001:0.025 and -0.025)arc(270.001:0.001:0.025 and -0.025) -- cycle;
                    \path[point] (11.612, 20.279)arc(0.001:90.001:0.025 and -0.025)arc(90.001:179.999:0.025 and -0.025)arc(179.999:269.999:0.025 and -0.025)arc(269.999:0.001:0.025 and -0.025) -- cycle;
                    \path[point] (11.609, 19.165)arc(0.001:90.001:0.025 and -0.025)arc(90.001:179.999:0.025 and -0.025)arc(179.999:269.999:0.025 and -0.025)arc(269.999:0.001:0.025 and -0.025) -- cycle;
                    \path[main_line,cm={ 0.819,-0.473,0.473,0.819,(-5.071, 6.743)}] (8.09, 19.16) -- (6.905, 19.16) -- (6.312, 20.187) -- (6.905, 21.213) -- (8.09, 21.213) -- (8.683, 20.187) -- cycle;
                    \path[main_line] (9.648, 20.285) -- (10.62, 20.042) -- (11.592, 20.285);
                    \path[main_line] (10.62, 20.042) -- (10.62, 18.602);
                    \path[main_line] (10.62, 19.808) -- (10.122, 19.684) -- (10.122, 19.361) -- (10.62, 19.031);
                    \path[main_line] (10.122, 19.684) -- (9.648, 20.285);
                    \path[main_line] (10.122, 19.361) -- (9.648, 19.163);
                    \path[main_line] (10.62, 19.808) -- (11.118, 19.684) -- (11.118, 19.361) -- (10.62, 19.031);
                    \path[main_line] (11.118, 19.684) -- (11.592, 20.285);
                    \path[main_line] (11.118, 19.361) -- (11.592, 19.163);
                \end{scope}

                \begin{scope}[shift={(-10.65, -20.45)}]
                    \path[fill=gray, fill opacity=0.4] (12.071, 20.285) -- (12.071, 19.163) -- (13.043, 18.602) -- (13.043, 19.672) -- (12.071, 20.285);
                    \path[point] (12.101, 20.28)arc(0.001:89.999:0.025 and -0.025)arc(89.999:179.999:0.025 and -0.025)arc(179.999:270.001:0.025 and -0.025)arc(270.001:0.001:0.025 and -0.025) -- cycle;
                    \path[point] (12.098, 19.167)arc(0.001:89.999:0.025 and -0.025)arc(89.999:179.999:0.025 and -0.025)arc(179.999:270.001:0.025 and -0.025)arc(270.001:0.001:0.025 and -0.025) -- cycle;
                    \path[point] (13.067, 18.61)arc(0.001:89.999:0.025 and -0.025)arc(89.999:179.999:0.025 and -0.025)arc(179.999:270.001:0.025 and -0.025)arc(270.001:0.001:0.025 and -0.025) -- cycle;
                    \path[point] (14.037, 19.166)arc(0.001:89.999:0.025 and -0.025)arc(89.999:179.999:0.025 and -0.025)arc(179.999:270.001:0.025 and -0.025)arc(270.001:0.001:0.025 and -0.025) -- cycle;
                    \path[point] (14.032, 20.28)arc(0.001:90.001:0.025 and -0.025)arc(90.001:179.999:0.025 and -0.025)arc(179.999:269.999:0.025 and -0.025)arc(269.999:0.001:0.025 and -0.025) -- cycle;
                    \path[point] (13.069, 20.845)arc(0.001:89.999:0.025 and -0.025)arc(89.999:179.999:0.025 and -0.025)arc(179.999:270.001:0.025 and -0.025)arc(270.001:0.001:0.025 and -0.025) -- cycle;
                    \path[point] (13.063, 19.669)arc(0.001:90.001:0.025 and -0.025)arc(90.001:179.999:0.025 and -0.025)arc(179.999:269.999:0.025 and -0.025)arc(269.999:0.001:0.025 and -0.025) -- cycle;
                    \path[point] (12.732, 19.532)arc(0.001:89.999:0.025 and -0.025)arc(89.999:179.999:0.025 and -0.025)arc(179.999:270.001:0.025 and -0.025)arc(270.001:0.001:0.025 and -0.025) -- cycle;
                    \path[point] (12.333, 19.791)arc(0.001:90.001:0.025 and -0.025)arc(90.001:179.999:0.025 and -0.025)arc(179.999:269.999:0.025 and -0.025)arc(269.999:0.001:0.025 and -0.025) -- cycle;
                    \path[point] (12.334, 19.406)arc(0.001:89.999:0.025 and -0.025)arc(89.999:179.999:0.025 and -0.025)arc(179.999:270.001:0.025 and -0.025)arc(270.001:0.001:0.025 and -0.025) -- cycle;
                    \path[point] (12.73, 19.159)arc(0.001:89.999:0.025 and -0.025)arc(89.999:179.999:0.025 and -0.025)arc(179.999:270.001:0.025 and -0.025)arc(270.001:0.001:0.025 and -0.025) -- cycle;
                    \path[main_line,cm={ 0.819,-0.473,0.473,0.819,(-2.648, 6.743)}] (8.09, 19.16) -- (6.905, 19.16) -- (6.312, 20.187) -- (6.905, 21.213) -- (8.09, 21.213) -- (8.683, 20.187) -- cycle;
                    \path[main_line] (12.071, 20.285) -- (13.043, 19.672) -- (13.043, 18.602);
                    \path[main_line] (13.043, 19.672) -- (14.014, 20.285);
                    \path[main_line] (12.306, 19.798) -- (12.306, 19.404) -- (12.707, 19.156) -- (12.707, 19.533) -- cycle;
                    \path[main_line] (12.306, 19.798) -- (12.071, 20.285);
                    \path[main_line] (12.306, 19.404) -- (12.071, 19.163);
                    \path[main_line] (12.707, 19.156) -- (13.043, 18.602);
                    \path[main_line] (12.707, 19.533) -- (13.043, 19.672);
                \end{scope}

                \begin{scope}[shift={(-14.22, -17.4)}]
                    \path[point] (15.735, 20.617)arc(0.001:90.001:0.025 and -0.025)arc(90.001:179.999:0.025 and -0.025)arc(179.999:269.999:0.025 and -0.025)arc(269.999:0.001:0.025 and -0.025) -- cycle;
                    \path[point] (14.623, 18.7)arc(0.001:90.001:0.025 and -0.025)arc(90.001:179.999:0.025 and -0.025)arc(179.999:269.999:0.025 and -0.025)arc(269.999:0.001:0.025 and -0.025) -- cycle;
                    \path[point] (16.844, 18.7)arc(0.001:90.001:0.025 and -0.025)arc(90.001:179.999:0.025 and -0.025)arc(179.999:269.999:0.025 and -0.025)arc(269.999:0.001:0.025 and -0.025) -- cycle;
                    \path[point] (15.179, 19.661)arc(0.001:90.001:0.025 and -0.025)arc(90.001:179.999:0.025 and -0.025)arc(179.999:269.999:0.025 and -0.025)arc(269.999:0.001:0.025 and -0.025) -- cycle;
                    \path[point] (16.293, 19.659)arc(0.001:90.001:0.025 and -0.025)arc(90.001:179.999:0.025 and -0.025)arc(179.999:269.999:0.025 and -0.025)arc(269.999:0.001:0.025 and -0.025) -- cycle;
                    \path[point] (15.737, 19.375)arc(0.001:90.001:0.025 and -0.025)arc(90.001:179.999:0.025 and -0.025)arc(179.999:269.999:0.025 and -0.025)arc(269.999:0.001:0.025 and -0.025) -- cycle;
                    \path[point] (15.735, 18.698)arc(0.001:90.001:0.025 and -0.025)arc(90.001:179.999:0.025 and -0.025)arc(179.999:269.999:0.025 and -0.025)arc(269.999:0.001:0.025 and -0.025) -- cycle;
                    \path[main_line] (14.595, 18.695) -- (15.71, 20.625) -- (16.824, 18.695) -- cycle;
                    \path[main_line] (15.152, 19.66) -- (15.71, 19.373) -- (16.267, 19.66);
                    \path[main_line] (15.71, 19.373) -- (15.71, 18.695);
                \end{scope}

                \draw[->, thick] (-2.0, 1.1) to[bend right] node[midway, left] {$(0, 0)$} (-2.35, 0.5);

                \draw[->, thick] (-2.0, -1.8) to[bend right] node[midway, below] {$(0, 0)$} (-1.1, -2.2);

                \draw[->, thick] (1.1, -2.2) to[bend right] node[midway, below] {$(0, 1)$} (2.0, -1.8);
                
                \draw[->, thick] (2.35, 0.5) to[bend right] node[midway, right] {$(0, 2)$} (2.0, 1.1);
            \end{tikzpicture}
        \end{center}
        \caption{\label{Fig:MatchMoves31}Transformations of the quadrangulation to implement the 3--1-move}
    \end{figure}
\end{proof}

\begin{lemma}
    \label{Lemma:Subdivide}
    Let $\mathcal{C}$ be a quadrangulation of the surface $F$, and let $\mathcal{C}'$ be the result of the subdivision of $\mathcal{C}$. Then there exists a triangulation $\mathcal{T}$ of $F$ such that $\mathcal{C}'$ can be transformed into $\beta(\mathcal{T})$ by a finite sequence of cubical Pachner moves of types $(0, 0)$, $(0, 1)$, $(1, 0)$, $(0, 2)$ and $(2, 0)$.
\end{lemma}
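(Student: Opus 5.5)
The plan is to pick the obvious triangulation and then work one square of $\mathcal{C}$ at a time. For each square $Q$ of $\mathcal{C}$, choose one of its diagonals; this cuts $Q$ into two triangles. Then barycentrically subdivide these triangles. Gluing over all squares gives a triangulation $\mathcal{T}$ of $F$: after one barycentric subdivision, any degeneracies coming from the gluing of $\mathcal{C}$ (identified vertices, self-glued squares) become harmless, so $\mathcal{T}$ is a genuine triangulation in the sense used for $\beta$. Here $\mathcal{T}$ has $12$ triangles per square of $\mathcal{C}$, while $\beta(\mathcal{T})$ has $36$ squares per square of $\mathcal{C}$, and $\mathcal{C}'$ has $4$.

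Note that an edge $e$ of $\mathcal{C}$ is subdivided once in $\mathcal{C}'$ but split into four in $\beta(\mathcal{T})$. Both procedures are canonical on each edge of $\mathcal{C}$: $e$ is halved by barycentric subdivision and halved again by $\beta$. So the whole problem is local. It suffices to give a fixed sequence of moves of types $(0,0)$, $(1,0)$, $(0,1)$, $(2,0)$, $(0,2)$ that operates inside a single square $Q$ and transforms the four subdivided squares of $Q$ into the $36$ squares of $\beta(\mathcal{T})|_Q$. Before the local moves I will first refine the boundary: I expect to apply one $(1,0)$-type move along each edge of $\mathcal{C}'$ lying on $\partial Q$ (this is where a square is inserted across an edge, adding a vertex on it), so that every edge of $\mathcal{C}$ becomes four edges. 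Each such move is supported near a single edge and is compatible with both adjacent squares, so these moves can be performed globally first. I expect this step to be the main obstacle. The $(1,0)$ move adds a new square and alters both neighbouring squares, and one must check that the configurations in the two adjacent squares $Q, Q'$ stay consistent, and that they remain so even when $Q = Q'$ or the edge is glued to itself.

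After the boundary is refined, the interior of each $Q$ is a quadrangulated disk with fixed boundary of $16$ edges. The remaining task is to connect two quadrangulations of a disk that have the same boundary, using moves of types $(0,0)$, $(1,0)$, $(0,1)$, $(2,0)$, $(0,2)$ in the interior. I plan to do this explicitly with a figure sequence, in the style of Lemma~\ref{Lemma:MatchTransforms}. Use $(2,0)$ moves to create the inner square needed around each barycenter. Use $(0,0)$ flips to realign the diagonals into the cone structure of $\beta$ on each of the twelve small triangles. Use $(1,0)$ moves to create the interior vertices of degree three (the triangle centers of $\beta$). The square count $4 \to 36$ fixes the net number of $(1,0)$ and $(2,0)$ moves. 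A cleaner fallback, if the explicit sequence becomes unwieldy, is to observe that $\beta(\mathcal{T}')$ for any other triangulation $\mathcal{T}'$ of the same form is reachable by Lemma~\ref{Lemma:MatchTransforms}. It is then enough to reach $\beta$ of \emph{some} triangulation whose restriction to $Q$ is compatible with the boundary refinement, for example the triangulation of $Q$ obtained by coning the $16$-gon from the center. Its $\beta$-image differs from $\mathcal{C}'|_Q$ only by a radially symmetric pattern that can be produced by $(1,0)$ and $(0,0)$ moves sector by sector.
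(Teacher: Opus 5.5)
\textbf{There is a genuine gap, and it sits exactly in the step you flag as the main obstacle.}

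\textbf{The boundary-refinement step fails as described.} No cubical Pachner move subdivides an edge. A $(1,0)$ move applied to the two squares sharing an edge $e$ of $\mathcal{C}'$ deletes $e$. It inserts two new interior vertices, each joined to the other and to the two hexagon neighbours of one endpoint of $e$. No vertex is created on $e$; if one were, both adjacent squares would become pentagons. After such moves the sides of $Q$ are no longer unions of edges. So you never reach a quadrangulated disc with a fixed $16$-edge boundary, and the disc step itself is only announced. The fallback also miscounts: $\beta$ halves every edge of $\mathcal{T}$, so coning a $16$-gon gives $32$ boundary edges, not $16$.

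\textbf{The difficulty is structural, not cosmetic.} Join the midpoints of opposite sides in every square to obtain the dual curves.
For moves of types $(0,0)$, $(1,0)$, $(0,1)$, $(2,0)$, $(0,2)$, each mid-plane of the $3$-cube meets the boundary circle of $S$ in at most two points. Only $(1,1)$ violates this. Hence a move of these five types supported inside a disc $D$ never changes which edges of $\partial D$ are joined by dual arcs. In particular, two quadrangulations of a disc with the same boundary need not be connected by these moves, so your ``remaining task'' is false in general.

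Now compare the two ends. In $\mathcal{C}'$ the dual curves are parallel doubles of those of $\mathcal{C}$, and they cross each $Q$ from a side to the opposite side. In $\beta(\mathcal{T})$ every dual curve is the link of a vertex of $\mathcal{T}$, so no arc crosses $Q$ between opposite sides. The whole conversion must therefore be done by moves straddling the edges of $\mathcal{C}$, and your proposal gives no such argument.

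\textbf{The paper's shorter route does not escape this obstruction either.} It removes your $2$-versus-$4$ mismatch by skipping the barycentric subdivision, which $\beta$ does not need since it is defined simplex by simplex. Its $\mathcal{T}$ is $\mathcal{C}$ with one diagonal per square, so $\mathcal{C}'$ and $\beta(\mathcal{T})$ both halve every edge of $\mathcal{C}$. It then asserts that one $(1,0)$ and one $(0,0)$ move inside each square turn the $2\times 2$ block into $\beta$ of its two triangles.

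By the invariant above, this is impossible with the corners of $Q$ as vertices of $\mathcal{T}$. In the block, the arc through the left half of the top side exits through the bottom side. In $\beta$ of the two triangles, it is part of the link of the top-right corner and exits through the lower half of the right side.

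Indeed, in Fig.~\ref{Fig:MatchedMoves22} the centre of the middle grid is joined to the triangle vertices, not to the edge midpoints. So those two moves, applied to a subdivided square, yield $\beta$ of two triangles whose vertices are the side midpoints of $Q$, with the vertices of $\mathcal{C}$ acting as edge midpoints. These pieces do not glue into $\beta$ of a triangulation: a vertex of $\mathcal{C}$ of degree $d$ ends up with degree $2d$, while an interior edge midpoint of $\beta(\mathcal{T})$ has degree $4$.

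What is missing, in your proposal and in that shortcut alike, is a genuinely non-local argument. It must show that moves across the edges of $\mathcal{C}$ turn the doubled dual curves of $\mathcal{C}'$ into vertex links.
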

\begin{proof}
    Each group of four squares resulting from the subdivision of an original square of $\mathcal{C}$ can be transformed into six squares that form a quadrangulation matching some triangulation: a move of type $(1, 0)$ and then a move of type $(0, 0)$ (in fig. \ref{Fig:MatchedMoves22}, this uses half of the transformations, from the central quadrangulation to either the initial or the final quadrangulation).
\end{proof}

\begin{theorem}
    \label{Theorem:Transforms}
    Let $\mathcal{C}_1$ and $\mathcal{C}_2$ be two quadrangulations of the surface $F$. Then there is a finite sequence of subdivisions and de-subdivisions and cubical Pachner moves of types $(0, 0)$, $(0, 1)$, $(1, 0)$, $(0, 2)$ and $(2, 0)$ that transform $\mathcal{C}_1$ into $\mathcal{C}_2$.
\end{theorem}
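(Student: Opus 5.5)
The plan is to pass through matched quadrangulations and then apply Lemma~\ref{Lemma:MatchTransforms}. Let $\mathcal{C}_1'$ and $\mathcal{C}_2'$ be the subdivisions of $\mathcal{C}_1$ and $\mathcal{C}_2$; each is reached from the original by a single subdivision.

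First, I apply Lemma~\ref{Lemma:Subdivide} to $\mathcal{C}_1$. This gives a triangulation $\mathcal{T}_1$ of $F$ and a finite sequence of cubical Pachner moves of types $(0,0)$, $(0,1)$, $(1,0)$, $(0,2)$, $(2,0)$ taking $\mathcal{C}_1'$ to $\beta(\mathcal{T}_1)$. In the same way I obtain a triangulation $\mathcal{T}_2$ and a sequence of allowed moves taking $\mathcal{C}_2'$ to $\beta(\mathcal{T}_2)$.

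Second, since $\mathcal{T}_1$ and $\mathcal{T}_2$ triangulate the same surface $F$, Lemma~\ref{Lemma:MatchTransforms} gives a finite sequence of moves of the allowed types taking $\beta(\mathcal{T}_1)$ to $\beta(\mathcal{T}_2)$.

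Third, I concatenate the pieces:
\[
\mathcal{C}_1 \to \mathcal{C}_1' \to \beta(\mathcal{T}_1) \to \beta(\mathcal{T}_2) \to \mathcal{C}_2' \to \mathcal{C}_2 .
\]
The first arrow is a subdivision, the middle three are sequences of allowed Pachner moves, and the last is a de-subdivision. For the arrow $\beta(\mathcal{T}_2)\to\mathcal{C}_2'$ I reverse the sequence from the first step. The inverse of a move of type $(X,Y)$ has type $(Y,X)$, so the set $\{(0,0),(0,1),(1,0),(0,2),(2,0)\}$ is closed under inversion, and the reversed sequence again uses only allowed types. The last arrow is the inverse of the subdivision $\mathcal{C}_2\to\mathcal{C}_2'$.

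The main care point is bookkeeping rather than a real obstacle. Quadrangulations are considered up to equivalence, so I need the composite transformations to respect equivalence. Lemma~\ref{Lemma:MatchTransforms} is stated for quadrangulations that match triangulations, that is, up to equivalence with $\beta(\mathcal{T}_i)$, and this is exactly what Lemma~\ref{Lemma:Subdivide} produces. I also need Lemma~\ref{Lemma:Subdivide} to apply to an arbitrary quadrangulation, including one whose squares are glued to themselves. Its proof is local: each subdivided square is replaced by the six-square pattern, and these replacements are done inside distinct original squares. So I expect no difficulty beyond noting that the local moves in different squares commute.
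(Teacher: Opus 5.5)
Your proposal is correct and follows the paper's proof: subdivide each $\mathcal{C}_i$, use Lemma~\ref{Lemma:Subdivide} to reach $\beta(\mathcal{T}_i)$, connect these by Lemma~\ref{Lemma:MatchTransforms}, then reverse the second chain, ending with a de-subdivision. Your observation that the allowed move types are closed under inversion makes explicit what the paper leaves implicit in ``apply the reverse transformations.''
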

\begin{proof}
    The theorem follows from Lemmas \ref{Lemma:MatchTransforms} and \ref{Lemma:Subdivide}. Indeed, we start from the quadrangulation $\mathcal{C}_1$, apply one subdivision, and then use the required cubical Pachner moves to obtain a quadrangulation $\mathcal{C}'$ that matches some triangulation $\mathcal{T}_1$ (by Lemma \ref{Lemma:Subdivide}). We do the same with $\mathcal{C}_2$ and obtain a quadrangulation $\mathcal{C}_2'$ that matches a triangulation $\mathcal{T}_2$. Then, by Lemma \ref{Lemma:MatchTransforms}, we can transform $\mathcal{C}'$ into $\mathcal{C}_2'$ using cubical Pachner moves, after which we apply the reverse transformations from $\mathcal{C}_2'$ back to $\mathcal{C}_2$ (the last step being a de-subdivision).
\end{proof}

\begin{remark}
    To transform $\mathcal{C}_1$ into $\mathcal{C}_2$, it is sufficient to apply only one subdivision and one de-subdivision.
\end{remark}


\begin{thebibliography}{9}
    \bibitem{A1} G. Amendola, A 3-manifold complexity via immersed surfaces // J. Knot Theory Ramifications, 2010, V. 19, No. 1549.

    \bibitem{A2} G. Amendola, Orientable closed 3-manifolds with surface-complexity one // arXiv:1011.4196.

    \bibitem{BEE} Bern M., Eppstein D., Erickson J., Flipping Cubical Meshes // Engineering with Computers, 2001, V. 18, No. 3, P. 173–187.

    \bibitem{FH} Friedl S., Hannes J., Pachner’s Theorem // 2021-2022 MATRIX Annals. MATRIX Book Series, 2024, V. 5, Springer.

    \bibitem{F1} Funar L., Cubulations mod bubble moves // Contemporary Mathematics. Proceedings of a Conference on Low Dimensional Topology (January 12-17, 1998, Funchal, Madeira, Portugal), 1999, V. 233, P. 29–43.

    \bibitem{F2} Funar L., Surface cubications mod flips // Manuscripta Mathematica, 2008, V. 125, P. 285–307.

    \bibitem{KK} Korablev Ph.G., Kazakov A.A., Manifolds of cubic complexity two // Siberian Electronic Mathematical Reports, 2016, V. 13, P. 1–15.
    
    \bibitem{L} Lickorish W.B.R., Simplicial moves on complexes and manifolds // Geometry \& Topology Monographs Volume 2: Proceedings of the Kirbyfest, 1999, P. 299–320.

    \bibitem{N} Nakamoto A., Diagonal Transformations in Quadrangulations of Surfaces // Journal of Graph Theory, 1996, V. 21, No. 3, P. 289-299.

    \bibitem{P} Pachner U., P.L. Homeomorphic Manifolds are Equivalent by Elementary Shellings // European Journal of Combinatorics, 1991, V. 12, P. 129-145.

    \bibitem{RST} Rubinstein J.H., Segerman H., Tillmann S., Traversing three-manifold triangulations and spines // arXiv:1812.02806.

    


\end{thebibliography}
\end{document}